\documentclass[english]{amsart}
\usepackage{amsmath} 
\usepackage{amssymb} 
\title{Playing simple loony dots and boxes endgames optimally.}
\author{Kevin Buzzard} \email{k.buzzard@imperial.ac.uk} \address{Department of Mathematics, Imperial College London, 180 Queen's Gate, London SW7 2AZ, England.}
\author{Michael Ciere} \email{} \address{}
\usepackage{a4wide} 

\usepackage[all]{xy} 
\usepackage{appendix}

\newcommand{\Z}{\mathbf{Z}}

\usepackage{amsthm}
\theoremstyle{plain}
\newtheorem{theorem}{Theorem}
\newtheorem{proposition}[theorem]{Proposition}
\newtheorem{lemma}[theorem]{Lemma}

\newtheorem{corollary}[theorem]{Corollary}
\theoremstyle{remark}
\newtheorem{remark}[theorem]{Remark}
\newtheorem{example}[theorem]{Example}
%

\begin{document}
\maketitle 
\begin{abstract}
We explain a highly efficient algorithm for playing the simplest type of
dots and boxes endgame optimally (by which we mean ``in such a way
so as to maximise the number of boxes that you take''). More precisely,
our algorithm applies to any endgame made up of long chains of any length
and long loops of even length (loops of odd length can show up in
generalisations of dots and boxes but they cannot occur in the original
version of the game).
The algorithm is sufficiently
simple that it can be learnt and used in over-the-board games by humans.
The types of endgames we solve come up commonly in
practice in well-played games on a $5\times5$ board and were in fact developed
by the authors in order to improve their over-the-board play.
\end{abstract}

\section{Introduction}
Dots and boxes is a two-person pencil-and-paper game, where players
take it in turns drawing lines on a (typically square) grid of dots.
If a player draws the the 4th line of a square, the player wins the
square and then makes another move. At the end of the game the person with
the most boxes wins. There are a couple of variants of the rules
in the literature; we are using the ``standard'' rules, as set out in Chapter~16
of~\cite{ww} and~\cite{berl}, which provide a very rich game
even on a board as small as $5\times 5$ (by which we mean 25 squares,
not 25 dots). These are also the rules implemented on various game sites
on the internet such as {\tt www.littlegolem.net}, {\tt www.jijbent.nl}
and {\tt www.yourturnmyturn.com}. In brief: you do not have to
complete a box, but if you do complete a box then you must make
another move. We follow Berlekamp in distinguishing between the idea of
a ``move'' (the act of drawing one line on the board) and a ``turn'' (the act
of drawing possibly several lines on the board, all but the last of
which completes at least one box).

Although perhaps initially counterintuitive, a crucial observation of
Berlekamp is that sometimes it is best not to make a box even if the
opportunity is available to you. Recall that a game is said to be played under
the Normal Play Rule if ``the player who completes the last legal
turn of the game wins''. Dots and boxes is not played under the Normal
Play Rule---at the end, box totals are counted up. However one can
introduce the game Nimdots,
which is dots and boxes but played under the Normal Play Rule.
It is noted in \cite{ww} that often a player will win the dots and boxes
game if and only if the same player wins the nimdots game. Furthermore
it is also not uncommon that the Nimdots optimal line of play (which
is trivial to compute in a given loony endgame position) will
win the dots and boxes game as well. This advice should nowadays be taken
with a pinch of salt in expert-level play, but is certainly often
true in endgame positions typically reached in games
between amateurs. 

As is explained in both~\cite{ww} and~\cite{berl}, there is a generalisation
of dots and boxes called strings and coins, played on what is basically
an arbitrary finite graph (the dictionary being that a box corresponds
to a vertex and an undrawn line to an edge), where players take turns in
cutting edges, and they claim isolated vertices. The analogous generalisation
of nimdots is called nimstring, and in this more general setting one
can of course see positions containing, for example, isolated
loops of any length (including odd lengths, or lengths less than~4; this
cannot happen in a
dots and boxes game). We will occasionally mention positions containing
loops of odd length, but our main results on endgames will be obtained under the
assumption that all loops have even length.

A lot has been written about nimdots and nimstring, which are far more
amenable to analysis than dots and boxes. Nimdots is an impartial game which
yields well to Sprague--Grundy (nim) theory. We do not go into this theory here
(see the references cited above for a very thorough analysis) because
the emphasis of this paper is not on winning nimdots but on winning
actual dots and boxes games, especially if they are close.
The sort of position that the
authors are interested in is something like Figure~\ref{exampleendgame}.
\begin{figure}[h!]
\centerline{
\xymatrix{
	{\bullet} \ar@{-}[r]	& {\bullet} \ar@{-}[d] \ar@{-}[r]	&{\bullet} \ar@{-}[r]	&{\bullet} \ar@{-}[d] \ar@{-}[r]	&{\bullet} \ar@{-}[r]	&{\bullet}\\
	{\bullet}	& {\bullet} \ar@{-}[d] &{\bullet}	&{\bullet} \ar@{-}[d] &{\bullet} \ar@{-}[d] \ar@{-}[r] &{\bullet}\\
	{\bullet} \ar@{-}[d] \ar@{-}[r]	& {\bullet} \ar@{-}[r]	&{\bullet} \ar@{-}[r] &{\bullet}	&{\bullet} \ar@{-}[d]	&{\bullet}\\
	{\bullet} & {\bullet}	\ar@{-}[d] \ar@{-}[r] &{\bullet}	\ar@{-}[r] &{\bullet} \ar@{-}[d] \ar@{-}[r] &{\bullet} \ar@{-}[r] &{\bullet} \ar@{-}[d]\\
	{\bullet} \ar@{-}[r] & {\bullet}	\ar@{-}[d] &{\bullet} \ar@{-}[d] &{\bullet} \ar@{-}[d] &{\bullet} &{\bullet} \ar@{-}[d]\\
	{\bullet}	& {\bullet}	&{\bullet} &{\bullet}	\ar@{-}[r] &{\bullet} \ar@{-}[r]	&{\bullet}
	}
}
\caption{A typical dots-and-boxes endgame}
\label{exampleendgame}
\end{figure}
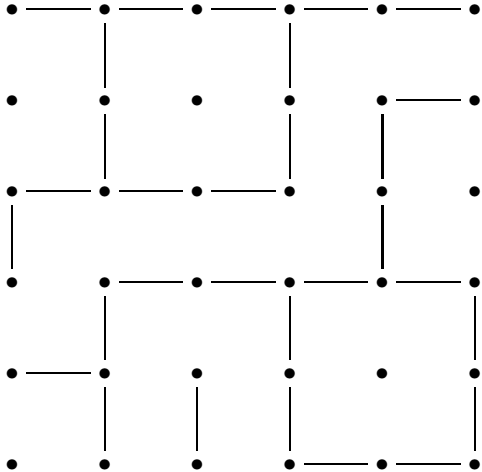
This is not a pathological position---this the sort of position
can easily occur as the endgame of a well-played game of
of dots and boxes.
Here 30 moves have been played. The assiduous reader of the references
above will know that player one has achieved his nimdots aim: the
board has two long chains (we refer the reader to the references
above for all basic definitions such as chains and loops) and hence
player 1 will win the nimdots game from this position. However player~2
has been smart and made some loops, it is
possible and relatively easy to prove that despite winning the nimdots
game, player~1 will actually lose the dots and boxes
game 13-12. This phenomenon is well-known to the authors of~\cite{ww}, who
observe on p569 that ``experts will need to know something about the rare
occasions when the Nimstring theory does not give the correct dots-and-boxes
winner'' and on p577 that ``Your best chances
at Dots-and-Boxes are likely to be found by the Nimstring strategy''.
These words have not aged well however. 
With the advent of games sites on the internet
such as {\tt jijbent.nl} and {\tt littlegolem.net} over the last few
years, the number of very strong dots and boxes players has skyrocketed,
and one of the first things that one discovers when playing games
against stronger players on these sites is that a nimdots player, even
one who has assiduously read the dots and boxes chapter of~\cite{ww}, stands
very little chance against someone who can count a dots and boxes endgame
correctly. This paper explains an optimal way of playing the
simplest possible dots and boxes endgames, composed only of isolated loops and
chains; even this simple situation is more subtle than one might imagine.
In fact there seems to be very little in the literature about
counting boxes, and more generally about the differences between
nimdots and dots and boxes. After some preliminary definitions we
will list everything that we are aware of.
Chapters~10 and~11 of~\cite{berl} provide
a good introduction to this material.

The \emph{value} of a game (played between $X$
and $Y$, with $X$ to play) is the number of future boxes that $Y$ will get
minus the number of future boxes $X$ will get, under best dots-and-boxes play.
For example, the value of an $n$-chain for $n\geq1$ is $n$, because $X$ will
have to open it and then $Y$ will take all of it.
Note that this is a very different notion to the nim-value of the game,
a notion that we will not consider at all in this paper
(the nim-value of any chain of length $n\geq3$ is zero). Values are considered
in Chapters~10 and~11 of~\cite{berl}, and also on
pages 574--575 of~\cite{ww}; this latter reference
states a formula for the value of a game comprising only
of long chains (without proof; the proof, which is not hard, is
given in Theorem~\ref{allchains} here).
Next, there is the paper~\cite{bs}, which makes
a beautiful analysis of some very topologically complicated endgames
that can occur on very large board. Finally there is Scott's MSc
thesis~\cite{scott},
available in the UC Berkeley library, which makes some more interesting
observations about values of certain loony dots and boxes endgames.

The contribution of this paper is to give a practical algorithm
useful for small-board play in a topologically simple endgame
such as Figure~\ref{exampleendgame}. In a sense our work is closest to pages
574--575 of~\cite{ww}, which we take much further because we
completely deal with the situation where the endgame contains
only chains and loops. The presence of loops complicates matters
immensely, but is crucial if one wants a practical algorithm because
in high-level play on a $5\times5$ board, the player who realises that
he will probably be losing the nimdots game (i.e., the chain battle)
will quickly turn their attention
to rigging the situation to winning the dots and boxes game regardless,
typically by making loops. Our philosophy when writing the paper was
to assume that the reader knows the theory as developed in~\cite{berl}
(which we only skim through), but then to give detailed proofs of the
results we need, including results which are stated without proof
in~\cite{berl} and~\cite{ww}.

\medskip

{\bf Acknowledgements.} KB would like to thank the player {\tt Carroll}
at {\tt littlegolem.net} for several helpful discussions about
values; in particular the correct formulation of Theorem~\ref{allloops} was
only made after one such discussion. MC would like to thank Astrid B\"onning, for sharing some original ideas. Part of this work
was initially done by MC for a Dutch school project, for which he would like to thank his teacher and supervisor, Bart van de Rotten.

\section{An overview of the problem considered in this paper.}
This paper considers dots and boxes positions that have reached
the ``loony endgame'' stage---that is, in which every available
move on the board not only gives away a box, but is a loony move
in the sense of~\cite{ww}. Recall that a loony move
is a move which gives your opponent the choice of who makes the last
move of the game. An example of a loony endgame would be a position
like Figure~\ref{nonsimpleloony}.
\begin{figure}[h!]
\centerline{
\xymatrix{
	{\bullet} \ar@{-}[r]	& {\bullet} \ar@{-}[r]	&{\bullet} \ar@{-}[d] \ar@{-}[r]	&{\bullet} \ar@{-}[r]	&{\bullet} \ar@{-}[d] \ar@{-}[r]	&{\bullet}\\
	{\bullet} \ar@{-}[r]	& {\bullet} &{\bullet} \ar@{-}[d]	&{\bullet} \ar@{-}[d] &{\bullet} \ar@{-}[d]	&{\bullet} \ar@{-}[d]\\
	{\bullet} \ar@{-}[d] \ar@{-}[r]	& {\bullet} \ar@{-}[r]	&{\bullet} \ar@{-}[d] &{\bullet} \ar@{-}[d]	&{\bullet} \ar@{-}[d]	&{\bullet} \ar@{-}[d]\\
	{\bullet} \ar@{-}[d] & {\bullet}	&{\bullet} \ar@{-}[d]	&{\bullet}	&{\bullet} \ar@{-}[d]	&{\bullet}\\
	{\bullet} \ar@{-}[d] & {\bullet} \ar@{-}[r]	&{\bullet} \ar@{-}[r]	&{\bullet} \ar@{-}[d] \ar@{-}[r] \ar @{} [dr] |*+{X}	&{\bullet} \ar @{} [dr] |*+{X} \ar@{-}[d] \ar@{-}[r]	&{\bullet} \ar@{-}[d]\\
	{\bullet} \ar@{-}[r]	& {\bullet} \ar@{-}[r]	&{\bullet} &{\bullet} \ar@{-}[r]	&{\bullet} \ar@{-}[r]	&{\bullet}
	}
}
\caption{A loony endgame}
\label{nonsimpleloony}
\end{figure}
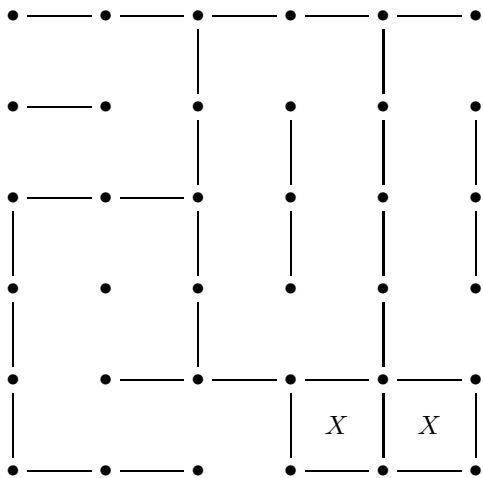
Here the only available moves are in chains of length at least 3 (a chain
of length at least~3 is called a ``long chain''), or loops (any loop
on a dots and boxes board has length at least~4 and is hence a ``long loop'').
Any move in a long chain gives your opponent the opportunity to play the
``all-but-two trick'' if they so desire, where they take all but two
of the boxes offered and then leave the last two by playing a so-called
``double-cross'' move, as indicated in Figures~\ref{loonymove} and~\ref{loonymove2}.
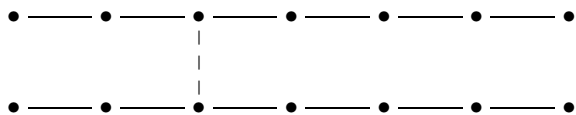
\begin{figure}[h!]
\centerline{
\xymatrix{
	{\bullet} \ar@{-}[r]		& {\bullet} \ar@{-}[r]		&{\bullet} \ar@{--}[d] \ar@{-}[r]		&{\bullet} \ar@{-}[r]		&{\bullet} \ar@{-}[r]		&{\bullet} \ar@{-}[r]		&{\bullet}	\\
	{\bullet} \ar@{-}[r]		& {\bullet} \ar@{-}[r]		&{\bullet} \ar@{-}[r]		&{\bullet} \ar@{-}[r]		&{\bullet} \ar@{-}[r]		&{\bullet} \ar@{-}[r]		&{\bullet}
	}
}
\caption{Player $X$ plays the vertical move opening a chain\ldots}
\label{loonymove}
\end{figure}
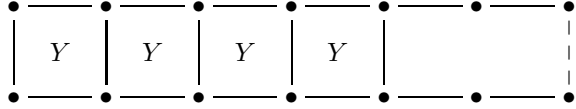
\begin{figure}[h!]
\centerline{
\xymatrix{
	\ar @{} [dr] |*+{Y}{\bullet}\ar@{-}[d]\ar@{-}[r]	& \ar @{} [dr] |*+{Y}{\bullet}\ar@{-}[d]\ar@{-}[r] & \ar @{} [dr] |*+{Y}{\bullet}\ar@{-}[d]\ar@{-}[r]	& \ar @{} [dr] |*+{Y}{\bullet}\ar@{-}[d]\ar@{-}[r]	& {\bullet}\ar@{-}[d]\ar@{-}[r]	& {\bullet}\ar@{-}[r]	&{\bullet}\ar@{--}[d]	\\
	{\bullet}\ar@{-}[r]	& {\bullet}\ar@{-}[r]	&{\bullet}\ar@{-}[r]	&{\bullet}\ar@{-}[r]	&{\bullet}\ar@{-}[r]	&{\bullet}\ar@{-}[r]	&{\bullet}
	}
}
\caption{Instead of taking all six boxes, player $Y$ takes four boxes but
leaves the last two for $X$, forcing
him to make another loony move elsewhere after taking his two
free boxes.}
\label{loonymove2}
\end{figure}

Similarly there is an ``all but four'' trick that applies if a player
opens a loop: their opponent can take all of the loop and play first
in the remainder of the game, or (as in Figure~16 on p554 of~\cite{ww})
they can leave four boxes and force
the opener to play first in the remainder of the game.

Our main results apply to what we call ``simple loony
endgames'', which are endgames where every connected component of the game
is either
an isolated loop or an isolated long chain. We also assume
that every loop is of even length (this is automatic for a dots and boxes
position, but may not hold for a general strings and coins
position\footnote{There is currently, as far as we know, still no known
method for \emph{efficiently} analysing a general loony strings and coins
position consisting of only isolated long loops and chains; the only method
we know is to do a brute force search down the game tree. The point of
this paper is to show how one can get away with much less if all loops
have even length.}). Such positions arise
commonly in high-level play. Figure~\ref{simpleloony} is an example which we will
work through carefully to highlight some issues. It is a game
between player~$X$ and player~$Y$, and $Y$ started.
One checks that 21 moves have been played, and no boxes have been taken,
so it is $X$'s turn. An expert will see instantly that~$X$ has lost,
but we want to talk about maximising the number of boxes they get.

\bigskip
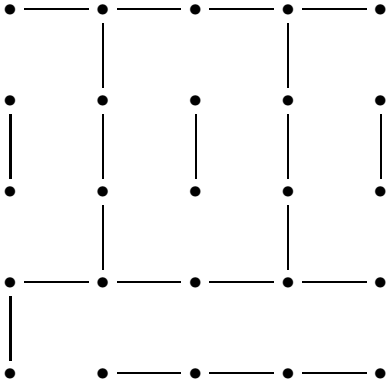
\begin{figure}[h!]
\centerline{
\xymatrix{
	{\bullet} \ar@{-}[r] & {\bullet} \ar@{-}[d] \ar@{-}[r]	&{\bullet} \ar@{-}[r]	&{\bullet} \ar@{-}[d] \ar@{-}[r]	&{\bullet}\\
	{\bullet} \ar@{-}[d]	& {\bullet} \ar@{-}[d] &{\bullet} \ar@{-}[d] &{\bullet} \ar@{-}[d] &{\bullet} \ar@{-}[d]\\
	{\bullet} & {\bullet} \ar@{-}[d] &{\bullet} &{\bullet} \ar@{-}[d] &{\bullet} \\
	{\bullet} \ar@{-}[r] \ar@{-}[d] & {\bullet} \ar@{-}[r]	&{\bullet} \ar@{-}[r]	&{\bullet} \ar@{-}[r] &{\bullet} \\
	{\bullet}  & {\bullet} \ar@{-}[r]	&{\bullet} \ar@{-}[r]	&{\bullet} \ar@{-}[r] & {\bullet}
	}
}
\caption{A simple loony endgame}
\label{simpleloony}
\end{figure}
\bigskip

Our notation for such positions: we denote by $n$ a chain of length $n$,
and by $n_\ell$ a loop of length $n$, so we would call this
position $3+3+4+6_\ell$, or $2\cdot3+4+6_\ell$.
If this were a game of nimdots, then $X$ would open one of the
loony regions (a 3-chain, the 4-chain or the 6-loop)
and $Y$ would keep control (that is, if $X$ opens
a chain then $Y$ would play the all-but-two trick,
and if $X$ opens the loop then $Y$ would play the all-but-four trick).
In either case $X$
gets some boxes, but then has to open the next loony region. $Y$
may continue in this way and ensure that they have the last complete
legal turn of the game. Hence~$Y$ can win the nimdots game, but of course
this line results in a 10-6 win for $X$ in the dots and boxes game
($X$ gets two boxes for each chain and four for the loop).
Of course $Y$ has far better options. We shall see in this paper
that one of the correct ways to play this position (there are several)
is as follows:

\begin{enumerate}

\item $X$ opens a 3-chain and $Y$ takes all three boxes.

\item $Y$ opens a 3-chain and $X$ declines the offer, taking one box
and giving~$Y$ the other two.

\item $Y$ opens the 6-loop and $X$ takes all six boxes.

\item $X$ opens the 4-chain and $Y$ takes all four boxes.
\end{enumerate}

The final score is 10-8 to $Y$.

What is going through each of the players' heads during this exchange?
At each stage of this process above, \emph{both} players had a question to
answer. The person not in control (the one opening the loony
regions) had to play a loony move and so they had to decide which
component of the game to open.
And then the person in control had to decide whether
to keep control (sacrificing some boxes in the process), or whether to
grab everything on offer and lose control. \emph{How are these
decisions made optimally?} 

By a ``simple loony endgame'' we formally mean a loony endgame each of
whose components are long chains or long loops of even length -- such
as the position in Figure~\ref{simpleloony}.
In this paper we explain two algorithms. One of them, when given a simple
loony endgame, quickly tells the player whose turn it is (that is, the player
not in control), which loony region should be opened first.
The other, when given a simple loony endgame, quickly returns its value;
given this second algorithm it is then a simple matter (see Section~\ref{stayincontrol}) for the player
in control to decide whether to remain in control or not. Hence if both
players have access to both algorithms, they can play the endgame
optimally. By ``quickly'' here we mean either
``without traversing the entire game tree'',
or we could mean ``quickly enough to be of practical use in an over-the-board
game''. We could even give a more formal definition of what we have
here; if we are given the game as an input in a certain format, then
our algorithms will terminate in time $O(1)$ \emph{independent of
the number of components on the board}; in particular we are beating
the ``traversing the game tree'' approach hands down. To give an
example of what we are doing in this paper, imagine a position consisting
of a hundred 3-chains and a hundred 4-loops. The naive method
for computing whether to open a 3-chain or a 4-loop would involve
recursively computing the values of a position consisting of $a$ 3-chains
and $b$ 4-loops for all $0\leq a,b\leq 100$. However (once one has
understood the notions of value and controlled value, which are
about to be explained)
Theorem~\ref{theoremmultiple3chains} of our paper implies immediately
that the player whose turn it is should open a 3-chain,
and Corollary~\ref{vmultiple3chains} implies immediately that their
opponent should take all three boxes. Essentially no calculations at all are
required, and certainly no iteration over all subgames.

The algorithms are given in Section~\ref{sect:algos}, and the reader
only interested in playing dots and boxes endgames optimally can
just commit these algorithms to memory (or, even better, learn the
few underlying principles behind them), and ignore the proofs
that the algorithms are correct.

\section{Notation and basic examples.}

We will typically let $G$ denote a dots and boxes game (for
example $G$ could be ``the opening position in a $5\times 5$ game''
or ``an endgame with five 5-chains''. 
A \emph{loony endgame} is a game $G$ where every available
move is a loony move. A \emph{simple loony endgame} is a game
comprising only of disjoint long loops of even length and chains (for example
the game in figure~\ref{simpleloony}, but not the game in figure~\ref{nonsimpleloony}, as this
has a chain running into a loop).

We say that the \emph{value} $v(G)$ of the game~$G$ is the net score
for \emph{the player whose turn has just finished}
if the game is played optimally (in the sense that both players
are trying to maximise the number of boxes they obtain). We ignore
any boxes taken prior to reaching the position $G$; for example the
value of the empty game is zero (whatever the scores of the players
are at this point). It is natural
to look at the net gain of the second player rather than the first,
for this convention means that the value of a loony endgame is always
non-negative: if a player makes a loony move, then it is clear that
their opponent will not make an overall net loss if they play optimally,
because the sum of the values of their two options is non-negative
and hence they cannot both be negative.

We can easily extend this notion of value to define the \emph{value of a move}:
we write $v(G;m)$ to denote the net score for the player whose turn has just
finished if the move $m$ is played by his opponent, and then the resulting
game is played optimally by both players. If $Z$ is an isolated long loop
or chain in~$G$ then we denote by $v(G;Z)$ the value $v(G;m)$ for $m$ any
move in~$Z$ (all such moves are loony moves and have the same value).

Sometimes it is convenient to refer to a game as the collection of several
disjoint components. We may for example write $G=H+K$, where we mean
that $G$ is the game consisting of the disjoint union of games $H$ and $K$.
One case where this is useful is when we want to evaluate how the value of a
game changes when we slightly change one component. Say, for example, we are
interested in the game $G+n$, which consists of $G$ and a long chain of
length $n$. Its value $v(G+n)$ is then a function of $n$.

A notion that was introduced by Berlekamp and has proven to be very useful is
the \emph{controlled value} of a game. In fact we use this notion only
for simple loony endgames. For~$G$ a simple loony endgame, we first define
the \emph{fully controlled value} $fcv(G)$ to be the net score
for the player in control, if he keeps control for the entire game,
even to the extent of giving his opponent the last few boxes in the
connected region that is opened last. The controller hence loses
two boxes in each chain and four boxes in each loop. In particular
if~$G$ consists of long (by which we mean ``of length at least~3)
chains of lengths $c_1,c_2,\ldots$, and long (by which we mean ``of length
at least~4) loops of even lengths $\ell_1,\ell_2,\ldots$,
then $fcv(G)=\sum_i(c_i-4)+\sum_j(\ell_j-8)$.

The \emph{controlled value} $cv(G)$ of a simple loony endgame~$G$
is equal to the sum $fcv(G)+tb(G)$, where $tb(G)$, the \emph{terminal
bonus} of~$G$, is an integer calculated thus. The empty game has terminal
bonus zero. If $G$ is non-empty and has a chain of length at least~4,
or no loops, then $tb(G)=4$. If $G$ has loops but no chains,
then $tb(G)=8$. The remaining case is when $G$ has chains and loops,
but all chains are 3-chains; then we set $tb(G)=6$. The motivation
behind this definition is that the controlled value of a position
is the net gain for the player in control, assuming he remains
in control until the last (or occasionally the last-but-one) turn
of the game (we shall clarify this comment later).
Note in particular that whilst $v(G)$ is a subtle
invariant of~$G$, computing
$cv(G)$ is trivial for any simple loony endgame. It is easy
to check that $v(G)\equiv cv(G)$~mod~2 (as both are congruent
mod~2 to the total number of boxes in $G$). After we have developed
a little theory we prove (Lemma~\ref{lemmavcv}(a)) the assertion
on p86 of~\cite{berl},
namely that if $G$ is a simple loony endgame and $cv(G)\geq2$,
then $v(G)=cv(G)$.

For convenience, when talking about loony endgames we will refer to the player in control as the \emph{controller} and the other player as the
\emph{defender} from now on.

\section{The algorithms.}\label{sect:algos}

We briefly describe the two algorithms whose correctness we prove in this
paper. Both algorithms take as input a simple loony endgame.
The first algorithm computes its value (and is hence the one the controller
needs, to compute what to do after the defender opens a loony region
in a simple loony endgame). The second returns an optimal move
(and is hence the one used by the defender). 

\subsection{The algorithm to compute values.}

Let $G$ be a simple loony endgame.

\begin{itemize}
\item Is $G$ empty? If so, the value is zero. If not, continue.
\item Is $cv(G)\geq2$? If so, $v(G)=cv(G)$. If not, continue.
\item Are there any loops in~$G$? If so, continue.
If not, use Theorem~\ref{allchains} to determine $v(G)$.
\item Are there any 3-chains in~$G$? If so, continue. If not,
use Corollary~\ref{vno3chains} to determine~$v(G)$.
\item If $G$ has one 3-chain and every other component of $G$ is a loop, use Corollary~\ref{vloopsplus3} to determine~$v(G)$. If not, continue.
\item Does~$G$ have just one 3-chain? If so, use Corollary~\ref{vone3chainonebigchain} to compute $v(G)$, and if not, use Corollary~\ref{vmultiple3chains}.
\end{itemize}

\subsection{The algorithm giving optimal moves.}

Here we assume $G$ is non-empty. There will always be an optimal
move in~$G$ which is either opening the smallest loop or the smallest
chain (this is Corollary~\ref{smallestchainorloop}(a)).

\begin{itemize}
\item If $G$ has no chains, open the smallest loop. If $G$ has no loops,
open the smallest chain. If $G$ has both loops and chains, continue.
\item If $G$ has no 3-chains, open the smallest loop (Theorem~\ref{no3chains}).
Otherwise continue.
\item If $G$ has one 3-chain but every other component of~$G$ is a loop, use Theorem~\ref{loopsplus3} to
determine an optimal move. Otherwise continue.
\item If $G$ has one 3-chain, use Theorem~\ref{one3chainonebigchain}
to determine an optimal move. Otherwise use Theorem~\ref{theoremmultiple3chains}.
\end{itemize}

In fact applying the algorithms is relatively simple, but perhaps not quite as
simple as we have made things appear. Let us say
for example that we are trying to compute the value of a game~$G$
with loops and chains, but no 3-chains. Assume $cv(G)<2$.
Write $G=K+L$, where $L$ is all the 4-loops and $K$ is all the rest.
The algorithm above says that one should use Corollary~\ref{vno3chains}
to compute the value of~$G$, however looking at that corollary one
sees that to compute $v(G)$ one needs to compute $v(K)$, and
to compute $v(K)$ we need to apply the algorithm again! However,
one checks relatively easily that the number of times one needs to
``start again'' is at most~3 for any position, and that this
can only happen in some cases where $G$ has precisely one 3-chain
and at least one 4-loop. As we indicate in the paper, 
any time one needs to start again, the result in the paper
that one needs to compute the value of the subgame
comes strictly before the location of the order to start again,
so there is no way to get into an infinite loop.

To clarify, here is a worked example
of a worst-case scenario. Let $G$ be the game $3+4+{100 \cdot 4}_\ell+{100 \cdot 6_\ell}$
(a 3-chain, a 4-chain, a hundred 4-loops and a hundred 6-loops);
let us compute its value. Its controlled value is negative and odd.
Our first run through the value algorithm tells us to use
Corollary~\ref{vone3chainonebigchain}, where we see that to
compute $v(G)$ we need to know $v(G\backslash 3)$. Starting the value
algorithm again with $G\backslash 3$ (or just reading the remark after
Corollary~\ref{vone3chainonebigchain}) we are led to
Corollary~\ref{vno3chains}(d), where
we find that we need to compute
$v(G\backslash 3\backslash {100 \cdot 4_\ell})=v(4+{100 \cdot 6_\ell})$;
restarting for the second time (or just reading the comments in
Corollary~\ref{vno3chains}(d)) we find our way to Corollary~\ref{vno3chains}(c),
which tells us that $v(4+{100 \cdot 6_\ell})=4$. Hence $v(G\backslash 3)=4$.
This means (again by Corollary~\ref{vone3chainonebigchain}(e))
that we need to compute $v(G\backslash {100 \cdot 4_\ell})$, which by part (d)
is equal to~3 (note that we have already computed
$v(G\backslash {100 \cdot 4_\ell}\backslash 3)$), and hence $v(G)=3$ and we
are finished. This looks complicated but it is the worst-case scenario,
and often the calculations are much easier: the bottom line
is that computing the values of just
a few sub-positions has given us $v(G)$. An expert could have said
immediately that the value was either~1 or~3, but if you leave
your opponent the position $G+3$ and they open a 3-chain, you need
to know which possibility is the right one in order to decide whether
to double-cross or not.

\section{When to lose control.}\label{stayincontrol}

We briefly recall how the controller uses the algorithm which computes
the values of simple loony endgames, to answer the question of whether
to stay in control or not. This is standard stuff -- see for example
p82 of~\cite{berl}.

Consider a position $G+n$, that is, a game $G$ plus an
$n$-chain, with $n\geq3$. Let the players of this game be $X$ and~$Y$,
and let it be $X$'s move. Say $X$ opens
the $n$-chain. Then $Y$ takes all
but two of this chain (giving him a net score so far of $n-2$)
and then has to decide whether to take the last two
boxes and then to move first in~$G$, or whether to give the two
boxes to~$X$ and then move second in~$G$. Recall
that $v(G)$ is the net score for the second
player if the game is played optimally. If $Y$ keeps control
(and the game is then played optimally) then $Y$'s net gain will be
$n-2+(v(G)-2)$; if $Y$ relinquishes control, $Y$'s gain will be $n-2+(2-v(G))$.
The larger of these two numbers is $n-2+|v(G)-2|$, so
the critical question is
whether $v(G)\leq2$ or $v(G)\geq2$. If $v(G)>2$, $Y$ should keep control.
If $v(G)<2$, $Y$ should relinquish control, and if $v(G)=2$ then
it does not matter. We have proven
$$v(G+n;n)=n-2+|v(G)-2|$$
and may also deduce that if~$Y$ knows the value of~$G$ then they know
whether to keep control or not -- that is, they know how to play
the $n$-chain optimally after $X$ opens it.

Similarly, if $n_\ell$ denotes a loop of length $n\geq4$ then
$$v(G+n_\ell;n_\ell)=n-4+|v(G)-4|,$$ and faced with the
position $G+n_\ell$, with $n\geq4$, if $X$ opens the $n$-loop
then $Y$ keeps control if $v(G)>4$, loses control if $v(G)<4$,
and it does not matter what they do if $v(G)=4$.

In particular, given our two algorithms (one saying which loony move to play
in a simple loony endgame, the other computing the value of a simple
loony endgame), we know how to play the endgame perfectly.

Before we start on the proof of the correctness of our algorithms, we
develop some general theory. We start with a recap of the ``man in the
middle'' proof technique from Chapter~10 of~\cite{berl}.

\section{The Man in the Middle.}

The ``Man in the Middle'' is a technique explained on~p78 of~\cite{berl}.
In its simplest form it says that if the man in the middle (call him $M$)
is playing
two games of dots and boxes against two experts, and the starting positions
are the same, but~$M$ starts one game and does not start the
other, then~$M$ can guarantee a net score of zero by simply copying
moves from one game into the other -- the experts are then in practice
playing against each other. The crucial extension of this
idea is not to play the same game against the two experts, but to play
very closely-related games $G$ and $H$, typically identical apart from
in one simple region, and in this setting $M$ basically ``copies as
best they can'', where this has to be made precise when one of his opponents
plays in a region of one game which does not have an
exact counterpart in the
other. The outcome of such an argument will be a result
of the form $v(G)-v(H)\leq n$, or sometimes $|v(G)-v(H)|\leq n$,
when $G$ and $H$ are sufficiently similar. Note that we always
assume our experts are playing optimally, and so, for example,
if~$M$ opens a long chain against an expert then we may assume
that either they take all of the chain, or play the all-but-two
trick (other options, such as leaving three or more boxes, are
dominated by one of the two options above). 

We will now spell out one example in detail; the other results
in this section are proved using the same techniques.

\begin{example} If $J$ is an arbitrary game of dots and boxes,
then $|v(J+3)-v(J+4)|=1$. To prove this, set $G=J+3$ and $H=J+4$, and then let
the man in the middle, $M$, play both games, one as player~1 and the
other as player~2. Each time an opponent plays in one of the $J$'s, $M$
copies in the other~$J$. We now have to give a careful explanation
as to what algorithm $M$ will follow when the play moves out of~$J$.
When this happens, either an opponent has opened the 3-chain
(in which case $M$ opens the 4-chain in the other
game), or an opponent has opened the 4-chain (in which case $M$ opens
the 3-chain). $M$ now waits to see whether his opponent takes
all, or leaves two, and then copies the choice in the other game.
Note that at this point the games he's playing become the same, but
$M$ will either be a box down or a box up, depending on the choices
that his opponents made. We conclude that his net loss is no more than
one box, and hence $|v(J+3)-v(J+4)|\leq 1$. Now a parity argument
shows that in fact $|v(J+3)-v(J+4)|=1$.
\end{example}

Note finally that this argument does \emph{not} tell us whether
$v(J+3)>v(J+4)$ or $v(J+4)>v(J+3)$: both can happen, and distinguishing
between the two possibilities seems in general to be a very subtle issue.

An easy generalisation of the above argument gives the following
lemma.

\begin{lemma}\label{easyvals} Let $G$ be a dots and boxes game.

(a) If $1\leq m\leq n\leq 2$ or $3\leq m\leq n$ then $|v(G+m)-v(G+n)|\leq n-m$.

(b) If $4\leq m\leq n$ then $|v(G+m_\ell)-v(G+n_\ell)|\leq n-m$.
\end{lemma}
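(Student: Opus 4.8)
The plan is to generalise the man-in-the-middle argument from the worked example directly, treating parts (a) and (b) uniformly and reducing the general case to a telescoping sum of the unit-gap inequalities. I would first prove the basic one-step bounds $|v(G+m)-v(G+(m+1))|\le 1$ for chains (when $m\ge 3$, and also the degenerate case $1\le m\le 1\le 2$) and the analogous $|v(G+m_\ell)-v(G+(m+1)_\ell)|\le 1$ for loops (when $m\ge 4$), and then chain these together.

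For the single-step chain bound with $m\ge 3$, set $H=G+m$ and $K=G+(m+1)$ and let $M$ play both, copying every opponent move inside the common part $G$ across to the other board. When an opponent finally opens the long chain in one game, $M$ responds by opening the long chain in the other game; $M$ then watches whether that opponent takes the whole chain or plays the all-but-two trick, and copies the same decision back. Since the two chains differ in length by exactly one box, after both chains are resolved the two games have become literally identical, and $M$ is ahead or behind by at most one box in total. This yields $|v(G+m)-v(G+(m+1))|\le 1$. The loop step for $m\ge 4$ is identical in structure, except the controller's options are ``take everything'' versus the all-but-four trick; again the two loops differ by one box and $M$ copies the chosen option, so the net discrepancy is at most one. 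The degenerate chain case $1\le m\le n\le 2$ is tiny and can be checked by hand: a $1$-chain and a $2$-chain are not loony, so $M$ can still copy opponent responses and the lengths differ by at most one.

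To pass from single steps to the stated bounds, I would apply the triangle inequality along the telescope
$$v(G+m)-v(G+n)=\sum_{k=m}^{n-1}\bigl(v(G+k)-v(G+(k+1))\bigr),$$
so that $|v(G+m)-v(G+n)|\le\sum_{k=m}^{n-1}|v(G+k)-v(G+(k+1))|\le n-m$, and similarly for loops. Here one must note that every intermediate chain length $k$ with $m\le k\le n-1$ still lies in the allowed range (all $\ge 3$ in the main chain case, all $\ge 4$ in the loop case), so each single-step inequality is legitimately available; in the degenerate range $1\le m\le n\le 2$ the only nontrivial step is $k=1$, which is covered separately. Note I am content to prove $\le n-m$ only, since the lemma as stated asserts just an inequality (unlike the example, which upgraded to equality by a parity argument).

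The main obstacle is making $M$'s copying rule precise and genuinely legal in the one-step argument, specifically at the moment play leaves $G$ and the games temporarily desynchronise. I would need to confirm that $M$ can always open the long chain (or loop) in the ``other'' game exactly when his opponent opens it in the first game, that these openings are legal moves there, and that the ``take-all versus leave-two/leave-four'' decision really can be mirrored across boards whose remaining positions are by then identical. The worked example in the excerpt establishes exactly this template for $n=3$ versus $n=4$, so the real content of the proof is checking that nothing in the argument used the specific value of the chain length beyond the fact that the two long regions differ by a single box and offer the same menu of optimal controller responses.
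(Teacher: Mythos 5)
Your proposal is correct and takes essentially the same approach as the paper: the paper's proof is precisely the man-in-the-middle mimicry you describe (its entire text is ``mimic your opponents,'' noting that the hypotheses guarantee both chains are long or both short), carried out in one shot between $G+m$ and $G+n$ -- when an opponent opens one component, $M$ opens the other and copies the take-all versus double-cross decision, leaving identical positions with a discrepancy of at most $n-m$ boxes. Your telescoping through unit steps is a harmless repackaging of the same argument (and your routing through odd-length intermediate loops in part (b) is legitimate, since the lemma is implicitly a strings-and-coins statement, as the paper's own use of $7_\ell$ components shows).
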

\begin{proof} 

(a) Mimic your opponents: the inequalities say that either both chains
are long or both are short, so this is possible.

(b) Mimic your opponents.
\end{proof}

\begin{corollary}\label{smallestchainorloop}
(a) If $3\leq m<n$ then in the game $G+m+n$, opening the $n$-chain
is never strictly better than opening the $m$-chain, and
similarly if $4\leq m<n$ then opening the $n$-loop in $G+m_\ell+n_\ell$
is never strictly better than opening the $m$-loop.

(b) In a loony endgame comprising entirely of long isolated chains
and long isolated loops, there will be an optimal play of the game
in which all the chains are opened in order (smallest first), as
are all the loops.
\end{corollary}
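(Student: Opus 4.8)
The plan is to prove part~(a) by a direct computation built on the two displayed formulas from Section~\ref{stayincontrol} together with Lemma~\ref{easyvals}, and then to bootstrap part~(a) into part~(b) via an exchange/induction argument on the number of components. Throughout I use that the player to move (the defender) is trying to \emph{minimise} the value, since $v$ measures the controller's net gain.

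For part~(a), I would apply $v(H+n;n)=n-2+|v(H)-2|$ with $H=G+m$ to get $v(G+m+n;n)=n-2+|v(G+m)-2|$, and symmetrically $v(G+m+n;m)=m-2+|v(G+n)-2|$. Subtracting gives
$$v(G+m+n;n)-v(G+m+n;m)=(n-m)+\bigl(|v(G+m)-2|-|v(G+n)-2|\bigr).$$
The reverse triangle inequality bounds the bracketed term below by $-|v(G+m)-v(G+n)|$, which by Lemma~\ref{easyvals}(a) (applicable since $3\le m<n$) is at least $-(n-m)$; hence the whole difference is $\ge 0$. Thus opening the $n$-chain gives the controller at least as much as opening the $m$-chain, so it is never strictly better for the defender. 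The loop case is identical after replacing the chain formula by $v(G+n_\ell;n_\ell)=n-4+|v(G)-4|$ and Lemma~\ref{easyvals}(a) by~(b).

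For part~(b), I would induct on the number of components, noting first that in any play each component is opened exactly once, so the notion of "opening order" is well defined. Writing $G=G'+c_1+c_j$ with $c_1$ the smallest chain, part~(a) yields $v(G;c_1)\le v(G;c_j)$ for every chain $c_j$, and likewise $v(G;\ell_1)\le v(G;\ell_j)$ for the smallest loop $\ell_1$; since the defender minimises, $v(G)=\min_m v(G;m)$ over all openings $m$, and this minimum is therefore attained at $c_1$ or at $\ell_1$. So there is an optimal first move opening the smallest chain or the smallest loop. Whatever the controller then does (take all, or play the all-but-two or all-but-four trick), that component is consumed and the remainder is a simple loony endgame with one fewer component, to which the induction hypothesis supplies an optimal continuation whose chains and loops are each opened smallest-first.

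The one point needing care is the splicing step. Because the component opened first is the smallest of its type and every chain (resp.\ loop) surviving into the remaining game is at least as long, prepending this opening to the inductively sorted continuation keeps both the chain-opening and loop-opening sequences sorted; and since the first move and the continuation are each optimal, the spliced play is optimal. I expect this bookkeeping — confirming that the controller's response never disturbs either the smaller-first ordering or the optimality when the sub-play is pasted back in — to be the only delicate part, with everything else reducing to the inequality established in part~(a).
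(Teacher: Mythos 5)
Your proposal is correct and follows essentially the same route as the paper: part~(a) is the same computation, since your formula $n-2+|v(G+m)-2|$ equals the paper's $\max\{n-4+v(G+m),\,n-v(G+m)\}$, and your reverse-triangle-inequality step is just a repackaging of the paper's term-by-term comparison, both resting on Lemma~\ref{easyvals}. For part~(b) the paper simply writes ``immediate from (a)''; your induction-and-splicing argument is the natural expansion of that remark and is sound.
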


\begin{proof}

(a) Lemma~\ref{easyvals}(a) says $m-n\leq v(G+m)-v(G+n)\leq n-m$,
and hence $n+v(G+m)\geq m+v(G+n)$ and $n-v(G+m)\geq m-v(G+n)$.
Hence
$v(G+m+n;n)=\max\{n-4+v(G+m),n-v(G+m)\}\geq\max\{m-4+v(G+n),m-v(G+n)\}=v(G+m+n;m)$.
Hence your opening
the $n$-chain is always at least as good for your opponent as your
opening the $m$-chain. The same argument works for loops.

(b) Immediate from (a).
\end{proof}

As a consequence, we deduce that in a simple loony endgame, either
opening the smallest loop or the smallest chain is optimal! This
looks like a major simplification, but in fact distinguishing which
of the two possibilities is the optimal one is precisely the heart
of the matter.

Note also that as a consequence of this corollary, we may modify the
rules of dots and boxes by making it \emph{illegal} to open an $n$-chain if
there is an $m$-chain with $n>m\geq 3$; extra assumptions
like this simplify man in the middle arguments without changing
the values of any games, as we have just seen.

\section{Amalgamating loops and chains.}\label{amalsect}

Imagine you are winning a big chain battle in a dots and boxes game.
In amongst the game you can see two 5-chains, and you mentally
note that these will be worth one point each for you at the
end (as you will take three boxes and lose two, for each of them).
If someone were to offer to remove those two 5-chains and replace
them with a 6-chain (but without changing whose move it was, so you
were still winning the chain battle), then perhaps you would say
that you didn't mind much, because the two 5-chains are going to be worth two
boxes in total, which is what the 6-chain will bring in. More generally
you might be happy to swap an $a$-chain and a $b$-chain for an $(a+b-4)$-chain,
at least if $a,b\geq4$. Proposition~\ref{propamalgamate} below
proves that in fact
if $a,b\geq4$ then an $a$-chain and a $b$-chain are \emph{equivalent} to an
$(a+b-4)$-chain in huge generality.

\begin{proposition}\label{propamalgamate}Let $G$ be an arbitrary dots and boxes game,
and say $a,b\geq4$ and $c=a+b-4$. Then $v(G+a+b)=v(G+c)$.
\end{proposition}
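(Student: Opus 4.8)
The plan is to prove the equality $v(G+a+b)=v(G+c)$ by a Man-in-the-Middle argument, establishing the two inequalities $v(G+a+b)\le v(G+c)$ and $v(G+c)\le v(G+a+b)$ separately (or, if the copying strategy is symmetric enough, both at once as $|v(G+a+b)-v(G+c)|=0$). Set $G_1=G+a+b$ and $G_2=G+c$, and let $M$ play both games simultaneously, as first player in one and second player in the other. The core idea is that the two $a$-and-$b$ chains on one board will be kept in correspondence with the single $c$-chain on the other, while moves inside the common part $G$ are copied verbatim.

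\medskip

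The heart of the matter is to specify $M$'s response when an opponent plays outside $G$, since this is where the boards genuinely differ. First I would record the key structural fact underlying the equivalence: if one opens an $a$-chain and the controller plays the all-but-two trick, the net local effect is that the chain contributes $a-2$ boxes to the controller and passes control, leaving the residual two boxes to be handled by a double-cross. The idea is that opening the $a$-chain followed by the $b$-chain, with the all-but-two trick applied appropriately, should mirror opening the single $(a+b-4)$-chain: the arithmetic $a-2$ plus $b-2$ versus $c-2=a+b-6$ is exactly off by the two boxes that a single double-cross saves, which is what makes $M$'s copying lose nothing. So when an opponent opens the $c$-chain on board $2$, $M$ opens (say) the $a$-chain on board $1$; depending on whether that opponent takes all of $c$ or plays all-but-two, $M$ arranges to take-all or play all-but-two on board $1$, and then uses the freshly-created move in $G$ (or the remaining $b$-chain) to stay synchronised. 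The case analysis must cover: opponent takes the whole long chain and moves again, versus opponent plays the double-cross and hands the turn back; and symmetrically for the move on board $1$ being answered on board $2$.

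\medskip

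The main obstacle I expect is bookkeeping the residual boxes and whose-turn-it-is across the two boards so that at every synchronisation point $M$'s net box deficit is exactly zero rather than merely bounded by a constant. The earlier example only needed $|v(J+3)-v(J+4)|\le 1$, where a one-box slack is tolerated and then pinned down by parity; here I want an exact equality, so the copying must be genuinely lossless, and I cannot fall back on a parity fix-up to close a gap. Concretely, the delicate point is that when $M$ splits his attention between the two chains $a,b$ on one board and the single chain $c$ on the other, there is a moment where one board has ``used up'' one chain while the other has not, and I must check that $M$ can defer or pre-empt the remaining chain so that the two boards rejoin a common position with $M$ exactly even. Assuming this synchronisation goes through cleanly, I would obtain $v(G+a+b)-v(G+c)\le 0$; running the mirror-image strategy (swapping which board $M$ leads on) gives the reverse inequality, and together they yield the claimed equality.
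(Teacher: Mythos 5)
Your overall plan (man in the middle, pairing the $a$- and $b$-chains with the $c$-chain) is the same as the paper's, but the proposal stops exactly where the work begins: the sentence ``assuming this synchronisation goes through cleanly'' is the entire content of the proposition, and as stated your scheme cannot go through cleanly. The demand that the copying be ``genuinely lossless'' at every synchronisation point is unachievable and also unnecessary. If an opponent opens the $a$-chain on the $G+a+b$ board, there is no move on the $G+c$ board that mirrors it box-for-box (the chains have different lengths), so exact copying fails immediately. The paper handles this case by abandoning copying altogether: $M$ double-crosses, banking a net gain of $a-4$, which leaves the games $G+b$ and $G+c$; Lemma~\ref{easyvals} bounds the difference of their values by $c-b=a-4$, so the banked gain covers the worst-case discrepancy and $M$ breaks even \emph{on average across the two boards}, which is all the argument needs. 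Your framework has no place for this inequality-plus-buffer step, and without it this case is unresolvable.

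The second gap is in the case where an opponent opens the $c$-chain. You say $M$ ``arranges to take-all or play all-but-two on board 1'' in response to the opponent's choice, but this has the roles backwards: after $M$ opens the $a$-chain on board 1, it is the board-1 \emph{expert}, not $M$, who chooses between taking all and double-crossing, and $M$ cannot arrange anything. The paper's resolution has two ingredients you are missing. First, $M$ takes $c-2$ boxes and \emph{defers} his own keep-control decision. Second---and this is the key idea---since the board-2 expert played the opening of $c$ and experts play optimally, $v(G+c)=v(G+c;c)\geq c-2$, whence by Lemma~\ref{easyvals} $v(G+b)\geq b-2\geq 2$; this forces (WLOG) the board-1 expert to double-cross rather than take all of the $a$-chain. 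Only then can $M$ open the $b$-chain, after which $M$'s total loss on board 1 is $a+b-6=c-2$, matching his gain on board 2, and both boards reach the identical position ``$G$ plus a pending keep-control decision,'' with the expert's pending decision on one board mirroring $M$'s on the other. Without the optimality argument ruling out the expert taking all of the $a$-chain, your synchronisation breaks at precisely the moment you flagged as delicate. (A minor slip as well: the all-but-two trick \emph{retains} control for the controller at a net cost of $2$; it does not ``pass control'' as you wrote.)
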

\begin{proof} We use the man in the middle technique. The man
in the middle, $M$, plays two experts, playing the game $G+a+b$
with one and $G+c$ with the other, and starting in precisely one
of these games. 
We assume the experts play optimally. If we can show that $M$ always
at least breaks even on average
(that is, his net gain on one board is at least his net loss on the other,
always), regardless of which of the games he starts, then we have proved
the result. The work is in deciding how to play when one of $M$'s opponents
plays a move not in~$G$ and hence that cannot immediately be mirrored. The
easiest case is when an opponent opens the $a$-chain or the $b$-chain
(WLOG the $a$-chain). $M$ then doubledeals them (that is, he keeps
control), making a net gain of $a-4$
and leaving the games $G+b$ and $G+c$, whose values are known by
Proposition~\ref{easyvals}(b) to differ by at most $c-b=a-4$, so we
are finished in this case.

The fun starts when one of the experts opens the $c$-chain.
This move lets $M$ take $c-2$ boxes and then~$M$ has a decision
to make -- whether to keep control or not. If one of these decisions scores $v$
(not counting the $c-2$ boxes), then the other scores $-v$, so $v(G+c;c)$
is clearly at least $c-2$.
But we are assuming the experts are playing optimally,
so we may deduce $v(G+c)\geq c-2$. Again by Proposition~\ref{easyvals}(c)
we have $|v(G+c)-v(G+b)|\leq c-b$ (note that $c=b+(a-4)\geq b$)
and hence $v(G+b)\geq b-2\geq 2$. Hence if~$M$ opens the $a$-chain in $G+a+b$,
then we may assume that his opponent keeps control. Now $M$ has just made
a loss of $a-4$ but it is his move in $v(G+b)$ and he now opens the $b$-chain.
When his opponent takes the first $b-2$ boxes of this chain, we see
that $M$ has in total lost $a+b-6$ boxes in this game, and his opponent
has to decide whether or not to keep control in a game whose other component
is~$G$. On the other hand $M$ took $c-2=a+b-6$ boxes in the $G+c$ game,
and $M$ has to decide himself whether or not to keep control in this
game, so the positions have become identical and $M$ has a net score
of zero, and the proof is now complete.
\end{proof}

An easy induction on~$n$ gives

\begin{corollary}\label{amalchains} If $G$ is a game, if $n\geq1$,
if $c_i\geq4$ for $1\leq i\leq n$, and if $c=4+\sum_i(c_i-4)$, then
$v(G+c)=v(G+c_1+c_2+\ldots+c_n)$.
\end{corollary}
\qed

Now say $m$ is a move in the game $G$, say $c$ and the $c_i$ are
as in the previous corollary, and let $H_1=G+c$ and $H_2=G+c_1+c_2+\ldots+c_n$.
We can also regard $m$ as a move in $H_1$ and $H_2$.

\begin{lemma}\label{amalgoptimal}
The move $m$ is optimal in~$H_1$ if and only if it is optimal
in~$H_2$.
\end{lemma}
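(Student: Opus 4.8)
The plan is to reduce everything to the value identity already established in Corollary~\ref{amalchains}. Recall that a move is optimal for the player to move exactly when it realises the value of the position: writing $P$ for the player whose turn has just finished and $Q$ for the player to move, we have $v(H) = \min_{m'} v(H; m')$ over all legal moves $m'$ (since $Q$ plays so as to minimise $P$'s net score), so $m'$ is optimal precisely when $v(H; m') = v(H)$. The to-move player is the same in $H_1$ and $H_2$, and Corollary~\ref{amalchains} gives $v(H_1) = v(G+c) = v(G+c_1+\cdots+c_n) = v(H_2)$. Hence the lemma will follow as soon as I show $v(H_1; m) = v(H_2; m)$, for then $v(H_1;m)=v(H_1)$ holds if and only if $v(H_2;m)=v(H_2)$.

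To compare $v(H_1; m)$ and $v(H_2; m)$ I would observe that $m$ is a move in the common component $G$, and the chain region (the $c$-chain in $H_1$, the chains $c_1,\ldots,c_n$ in $H_2$) is a disjoint union of components that $m$ leaves untouched. Let $G'$ denote the position obtained by drawing the line $m$ in $G$. Then playing $m$ in $H_1$ produces $G' + c$ and playing $m$ in $H_2$ produces $G' + c_1 + \cdots + c_n$; moreover the number $k\ge 0$ of boxes that $m$ completes, and hence whether the turn passes to $P$ or stays with $Q$, are exactly the same on both boards, since these depend only on $m$ and $G$. By the definition of value, $v(H_i; m)$ is therefore one and the same explicit function of $k$ and of the value of the residual game (for instance $-v(\cdot)$ when $k=0$ and $-k+v(\cdot)$ when $k\ge 1$), the only difference between the two boards being which chain region survives.

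Finally I would invoke Corollary~\ref{amalchains} a second time, now with $G'$ in place of $G$, to get $v(G'+c) = v(G'+c_1+\cdots+c_n)$. Feeding this into the formulae above yields $v(H_1; m) = v(H_2; m)$, and together with $v(H_1)=v(H_2)$ this shows that $m$ is optimal in $H_1$ if and only if it is optimal in $H_2$. The one point requiring care, and the step I would flag as the main obstacle, is the bookkeeping of boxes and turn-order immediately after $m$ is played; the whole argument turns on the observation that this bookkeeping is \emph{forced} to agree on the two boards because $m$ lives entirely inside the shared component $G$, which is precisely what lets me apply the amalgamation identity to the common residual game $G'$. (One could instead run a man-in-the-middle argument in which the opponent's opening move is required to be $m$ on both boards and $M$ copies thereafter, but reducing directly to Corollary~\ref{amalchains} seems cleaner and avoids re-proving the copying estimate.)
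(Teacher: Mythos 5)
Your proof is correct and takes essentially the same route as the paper: both arguments reduce optimality of $m$ to two applications of Corollary~\ref{amalchains}, once to $H_1$ versus $H_2$ and once to the residual games $G'+c$ and $G'+c_1+\cdots+c_n$ obtained after $m$ is played. The only difference is organisational: the paper splits into cases according to whether $m$ captures a free box, is non-loony, or is loony (writing explicit optimality formulas such as $v(H_1)=s-v(H'_1)$ and $v(H_1)=s-\lvert d-v(H'_1)\rvert$), whereas you unfold the value recursion uniformly one move deep, tracking only the number $k$ of boxes completed by $m$ itself, which is a slightly cleaner packaging of the identical idea.
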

\begin{proof} Capturing free isolated boxes is always optimal,
so the result is obvious if $m$ is the capture of an isolated free box.
Indeed, we may assume that $G$ has no free boxes on offer.

Next say $m$ is a non-loony move in~$G$.
Then playing~$m$ has a cost~$s$ (the number of free boxes your opponent can take
after $m$) and, after playing~$m$ in~$G$ and then removing these boxes,
we are left with a game $G'$.
Set $H'_1=G'+c$ and $H'_2=G'+c_1+c_2+\ldots$. Then $m$ is optimal
in $H_1$ iff $v(H_1)=s-v(H'_1)$, and similarly for $H_2$. However
$v(H_1)=v(H_2)$ and $v(H'_1)=v(H'_2)$ by the preceding corollary,
and the result follows. 

Finally, assume~$m$ is a loony move, resulting in a handout of $s$ boxes
and a potential doubledeal involving $d\in\{2,4\}$ boxes (depending on whether
$m$ was in a chain or a loop). Let $G'$ denote the game obtained from~$G$
after all the available boxes are taken, and let $H'_1$ and $H'_2$
be as before. Then $m$ is optimal in $H_1$ iff $v(H_1)=s-|d-v(H'_1)|$,
and again by the previous corollary this is iff $v(H_2)=s-|d-v(H'_2)|$,
which is iff $m$ is optimal in $H_2$.
\end{proof}

Although less useful in practice on a small board, arguments
like the above can also be done for loops just as easily (with only
trivial modifications to the proofs). For example:

\begin{proposition} If $m,n\geq 8$ and $r=m+n-8$ then
$v(G+m_\ell+n_\ell)=v(G+r_\ell)$.
\end{proposition}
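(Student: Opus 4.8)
The plan is to mirror, almost verbatim, the man-in-the-middle argument of Proposition~\ref{propamalgamate}, replacing the all-but-two trick (which hands over $2$ boxes) by the all-but-four trick (which hands over $4$ boxes), and shifting every chain threshold up by four: where the chain proof used $a,b\geq4$ and $c=a+b-4$, here we use $m,n\geq8$ and $r=m+n-8$. Note first that $r=m+n-8\geq8$, so $r_\ell$ is a genuine long loop, and moreover $r-4\geq4$; these two facts play exactly the role that $b-2\geq2$ played for chains. I would let the man in the middle $M$ play the two experts, one on $G+m_\ell+n_\ell$ and the other on $G+r_\ell$, starting exactly one of the two games, and copying every move made inside $G$ from one board to the other. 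The whole content is in deciding what $M$ does once play leaves $G$.

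The easy case is when an expert opens one of the two loops of $G+m_\ell+n_\ell$, say the $m$-loop. Then $M$ keeps control by playing the all-but-four trick, taking $m-4$ boxes and returning four, for a net gain of $m-8$ on that board. The two games have now collapsed to $G+n_\ell$ and $G+r_\ell$, and since $4\leq n\leq r$ with $r-n=m-8$, Lemma~\ref{easyvals}(b) gives $|v(G+n_\ell)-v(G+r_\ell)|\leq m-8$. Thus $M$'s guaranteed surplus of $m-8$ on the first board covers any future deficit on the pair, and $M$ at least breaks even.

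The real work, exactly as for chains, is the case in which an expert opens the amalgamated $r$-loop on the $G+r_\ell$ board. Then $M$ takes $r-4$ boxes and must decide whether to keep or cede control; by the computation recorded in Section~\ref{stayincontrol} we have $v(G+r_\ell;r_\ell)=r-4+|v(G)-4|\geq r-4$, and hence, the experts playing optimally, $v(G+r_\ell)\geq r-4\geq4$. Feeding this into Lemma~\ref{easyvals}(b), namely $|v(G+r_\ell)-v(G+n_\ell)|\leq r-n=m-8$, yields $v(G+n_\ell)\geq(r-4)-(m-8)=n-4\geq4$. This inequality is the crux of the argument: because $v(G+n_\ell)\geq4$ meets the loop doubledeal threshold, when $M$ now opens the $m$-loop on the other board we may assume the opposing expert keeps control (if $v(G+n_\ell)=4$ the choice is immaterial, so $M$ loses nothing by this assumption). $M$ therefore concedes $m-8$ on that loop and then opens the $n$-loop; after the expert takes its first $n-4$ boxes, $M$ has conceded $(m-8)+(n-4)=m+n-12$ boxes on this board, while on the $G+r_\ell$ board $M$ has already banked $r-4=m+n-12$ boxes. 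The two boards have now become the identical position, consisting of $G$ together with four doubledealable boxes pending, with the roles of $M$ and expert interchanged; $M$ copies the one remaining decision across and finishes exactly level.

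The step I expect to be the main obstacle is precisely this hard case: extracting the lower bound $v(G+n_\ell)\geq4$ that forces the opponent to keep control, and then checking that the all-but-four bookkeeping $r-4=m+n-12$ makes the two boards coincide. Everything else is a routine transcription of the proof of Proposition~\ref{propamalgamate} with $2$ replaced by $4$ and the chain thresholds raised by four throughout.
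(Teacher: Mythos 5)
Your proof is correct and is exactly the route the paper intends: the paper states this proposition without a written proof, remarking that it follows from Proposition~\ref{propamalgamate} ``with only trivial modifications,'' and your argument carries out precisely those modifications (all-but-four in place of all-but-two, thresholds shifted by four, Lemma~\ref{easyvals}(b) in place of (a)). The key inequality $v(G+n_\ell)\geq n-4\geq 4$ forcing the expert to keep control, and the bookkeeping $r-4=m+n-12$ collapsing the two boards, are the right analogues of $v(G+b)\geq b-2\geq 2$ and $c-2=a+b-6$ in the chain case.
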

\qed
\begin{corollary} If $G$ is a game, if $n\geq1$, if $p_j\geq8$
for $1\leq j\leq n$ and if $p=8+\sum_j(p_j-8)$
then $v(G+p_\ell)=v(G+(p_1)_\ell+\cdots+(p_n)_\ell)$.
\end{corollary}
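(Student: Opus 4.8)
The plan is to imitate the derivation of Corollary~\ref{amalchains} from Proposition~\ref{propamalgamate}, replacing the chain-amalgamation result by the loop-amalgamation Proposition stated immediately above (the one asserting $v(G+m_\ell+n_\ell)=v(G+r_\ell)$ for $m,n\geq8$ and $r=m+n-8$). I would argue by induction on $n$. The base case $n=1$ is a tautology, since then $p=8+(p_1-8)=p_1$ and both sides equal $v(G+(p_1)_\ell)$.

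For the inductive step, suppose the result holds for any $n-1$ loops of length at least $8$ over any ambient game. Given $p_1,\dots,p_n\geq8$, set $p'=8+\sum_{j=1}^{n-1}(p_j-8)$. Applying the inductive hypothesis to the game $G+(p_n)_\ell$ (in the role of the ambient game) together with the loops $p_1,\dots,p_{n-1}$ gives
$$v\bigl(G+(p_n)_\ell+(p_1)_\ell+\cdots+(p_{n-1})_\ell\bigr)=v\bigl(G+(p_n)_\ell+(p')_\ell\bigr).$$
Now I would apply the loop-amalgamation Proposition to the two loops $(p')_\ell$ and $(p_n)_\ell$ over the ambient game $G$: their amalgamated length is $p'+p_n-8=8+\sum_{j=1}^{n}(p_j-8)=p$, so
$$v\bigl(G+(p')_\ell+(p_n)_\ell\bigr)=v(G+p_\ell).$$
Chaining these two equalities yields $v\bigl(G+(p_1)_\ell+\cdots+(p_n)_\ell\bigr)=v(G+p_\ell)$, completing the induction.

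The only point that needs any care --- and the step I would flag as the (very minor) main obstacle --- is checking that the hypotheses of the amalgamation Proposition remain satisfied throughout, namely that the intermediate loop $(p')_\ell$ is still a \emph{long} loop of length at least $8$, so that the Proposition is applicable. This is immediate: since each $p_j\geq8$ we have $p_j-8\geq0$, whence $p'=8+\sum_{j=1}^{n-1}(p_j-8)\geq8$. (One could equally amalgamate the loops one at a time from the left, absorbing $(p_j)_\ell$ into a running total whose length only ever increases, hence stays $\geq8$ at every stage.) No genuinely new idea beyond the man-in-the-middle argument already used to establish the Proposition is required; all the content sits in that result, and this Corollary is merely its iterated form.
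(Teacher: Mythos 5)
Your proposal is correct and matches the paper's own treatment: the paper derives this corollary from the loop-amalgamation proposition by exactly the "easy induction on $n$" you carry out (mirroring how Corollary~\ref{amalchains} follows from Proposition~\ref{propamalgamate}), and your check that the intermediate amalgamated loop has length at least~$8$ is the only hypothesis verification needed.
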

\qed
\begin{lemma} If $m$ is a move in $G$, then $m$ is optimal
in $G+p_\ell$ iff it is optimal in $G+(p_1)_\ell+\cdots+(p_n)_\ell$.
\end{lemma}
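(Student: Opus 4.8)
The plan is to mirror the proof of Lemma~\ref{amalgoptimal} essentially verbatim, replacing the chain-amalgamation corollary (Corollary~\ref{amalchains}) by its loop counterpart, the corollary immediately preceding this lemma. Write $H_1=G+p_\ell$ and $H_2=G+(p_1)_\ell+\cdots+(p_n)_\ell$. Since $m$ is a move in the common component $G$, it makes sense in both games, and the loops being amalgamated are never touched by $m$, so after $m$ is resolved those loops survive as separate components in each game. The trivial case is disposed of first: if $m$ is the capture of a free isolated box it is optimal in every game, so I may assume $G$ offers no free boxes.

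Next I would treat the case where $m$ is a non-loony move in $G$. Exactly as before, playing $m$ costs $s$ boxes (the ones the opponent then takes for free) and replaces $G$ by a game $G'$, with the amalgamated loops untouched. Setting $H'_1=G'+p_\ell$ and $H'_2=G'+(p_1)_\ell+\cdots+(p_n)_\ell$, the optimality criterion is the same one used in Lemma~\ref{amalgoptimal}, namely that $m$ is optimal in $H_i$ iff $v(H_i)=s-v(H'_i)$. The loop-amalgamation corollary applies both to $G$, giving $v(H_1)=v(H_2)$, and to $G'$ in place of $G$, giving $v(H'_1)=v(H'_2)$; hence optimality of $m$ in $H_1$ is equivalent to optimality of $m$ in $H_2$.

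Finally, if $m$ is a loony move in $G$ it produces a handout of $s$ boxes and a potential doubledeal of $d\in\{2,4\}$ boxes, according to whether the component of $G$ containing $m$ is a chain or a loop. With $G'$ the game left after the offered boxes are taken and $H'_1,H'_2$ as above, the criterion becomes that $m$ is optimal in $H_i$ iff $v(H_i)=s-|d-v(H'_i)|$, and the two instances of the loop-amalgamation corollary again force the two optimality conditions to coincide.

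The only thing genuinely worth checking is the legitimacy of re-applying the amalgamation corollary with $G'$ in place of $G$, i.e.\ that resolving $m$ inside $G$ leaves the loops $p_\ell$ (respectively $(p_1)_\ell,\ldots,(p_n)_\ell$) intact as separate components of even length. This is automatic because $m$ is a move in $G$ and the loops are disjoint from $G$, so there is no essential obstacle: this really is the \emph{trivial modification} of the chain argument promised just before the statement.
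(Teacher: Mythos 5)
Your proposal is correct and is exactly what the paper intends: the paper omits this proof entirely (stating just before the proposition that the loop versions follow from the chain arguments ``with only trivial modifications''), and you have carried out precisely those modifications, substituting the loop-amalgamation corollary for Corollary~\ref{amalchains} in each of the three cases of the proof of Lemma~\ref{amalgoptimal}. Your closing observation --- that the corollary may legitimately be re-applied with $G'$ in place of $G$ because it holds for an arbitrary game --- is the only point needing comment, and you handle it correctly.
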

\qed

\section{The easiest examples: all loops or all chains.}

We finally begin the proof of the correctness of our algorithms.
In this section we deal with simple loony endgames consisting
either entirely of long chains, or entirely of long loops (of even length).
In this case, Corollary~\ref{smallestchainorloop} tells us
that an optimal move is opening the component with smallest size,
and our task is hence to compute the value of such a game.
We start with the case where $G$ is made up entirely of chains;
part (b) of the theorem below is stated without proof
in the section ``when is it best to lose control'' in Chapter~16
of~\cite{ww}, and in a sense our paper starts where they leave off.

\begin{theorem}\label{allchains}

(a) If a simple loony endgame $G=c_1+c_2+\ldots+c_n$
consists of $n\geq1$ disjoint chains of lengths $c_1$, $c_2,\ldots,c_n\geq4$,
then
$v(G)=cv(G)=4+\sum_{i=1}^n(c_i-4)\geq4$.

(b) If a simple loony endgame $G=c_1+c_2+\ldots+c_n$
is composed entirely of $n\geq1$ long chains, and if $c=cv(G)=4+\sum_i(c_i-4)$
is the controlled value of~$G$, then $v(G)=c$
if $c\geq1$, and $v(G)\in\{1,2\}$ with $v(G)\equiv c$~mod~2
if $c\leq0$.
\end{theorem}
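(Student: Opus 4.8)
For part (a) the plan is to collapse the whole position to a single chain. Since every $c_i\geq4$, Corollary~\ref{amalchains} (applied with the empty game in the role of $G$) gives $v(c_1+c_2+\cdots+c_n)=v(C)$, where $C=4+\sum_i(c_i-4)$ is a single chain. A lone $C$-chain has value $C$: the player to move is forced to open it and, there being nothing else on the board, the opponent just takes all $C$ boxes (equivalently, the formula $v(G+n;n)=n-2+|v(G)-2|$ from Section~\ref{stayincontrol} with $G$ empty gives $v(C)=C-2+|0-2|=C$). Finally I would check straight from the definitions that $cv(G)=fcv(G)+tb(G)=\sum_i(c_i-4)+4=C$ (here $tb(G)=4$ since $G$ has no loops), and that $C\geq4$ because each summand $c_i-4$ is non-negative. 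This settles (a).

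For part (b) I would argue by induction on the number $k$ of $3$-chains in $G$, the base case $k=0$ being exactly part (a) (where $c\geq4\geq1$ and $v(G)=c$); I would also record the convention that the empty game has value $0$, to be fed into the recursion. For the inductive step, suppose $k\geq1$. Then the smallest chain in $G$ is a $3$-chain, so by Corollary~\ref{smallestchainorloop} opening a $3$-chain is optimal for the defender, and hence $v(G)$ equals the value obtained after that move. Writing $G=(G\setminus3)+3$ and applying the Section~\ref{stayincontrol} formula $v(H+n;n)=n-2+|v(H)-2|$ with $H=G\setminus3$ and $n=3$, I obtain the single clean recursion $v(G)=1+|v(G\setminus3)-2|$.

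It then remains to track the controlled value and turn the handle. Removing a $3$-chain raises $fcv$ by $1$ (a $3$-chain contributes $3-4=-1$) while leaving $tb=4$ unchanged (there are no loops), so $cv(G\setminus3)=c+1$. I would apply the induction hypothesis to $G\setminus3$ and substitute into $v(G)=1+|v(G\setminus3)-2|$, splitting into $c\geq1$ and $c\leq0$. When $c\geq1$ we have $cv(G\setminus3)=c+1\geq2$, so $v(G\setminus3)=c+1$ by induction and $v(G)=1+(c+1-2)=c$. When $c\leq0$ the induction hypothesis gives $v(G\setminus3)\in\{1,2\}$ with $v(G\setminus3)\equiv c+1\pmod 2$ (for $c=0$ this comes from $cv(G\setminus3)=1$ forcing $v(G\setminus3)=1$); since $v(G)=1+|v(G\setminus3)-2|=3-v(G\setminus3)$ for $v(G\setminus3)\in\{1,2\}$, we land in $\{1,2\}$ with $v(G)\equiv 3-(c+1)\equiv c\pmod 2$, as required. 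The edge case $G\setminus3=\emptyset$ (a single $3$-chain) is handled by the recorded value $v(\emptyset)=0$, giving $v(3)=1+|0-2|=3=c$.

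The only genuine obstacle is the $c\leq0$ regime, where $v(G)$ no longer equals $cv(G)$ but instead oscillates between $1$ and $2$; the delicate point is the absolute-value-and-parity bookkeeping across the boundary $c=0$, together with the empty-game base case, whose value $0$ is itself an exception to the statement of (b) and yet must be fed correctly into the recursion. Everything else is a routine substitution into the control formula.
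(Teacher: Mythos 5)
Your proof is correct, and it splits interestingly against the paper's. For part (b) you have essentially reproduced the paper's argument: the paper also inducts on the number of 3-chains, uses the same recursion $v(G)=v(G;3)=1+|v(G\backslash 3)-2|$ together with $cv(G\backslash 3)=c+1$, and makes the same split into $c\geq1$ (controller keeps control, $v(G)=v(G\backslash3)-1=c$) and $c\leq0$ (controller relinquishes, $v(G)=3-v(G\backslash3)\in\{1,2\}$); if anything you are more scrupulous than the paper, which leaves the lone-3-chain edge case (where $cv(G\backslash3)=c+1$ fails because the terminal bonus of the empty game is $0$, not $4$) and the $c=0$ boundary subcase implicit. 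For part (a), however, your route genuinely differs: the paper proves it by an easy induction on $n$ (open the smallest chain, note the residual controlled value stays at least $4$ so the controller keeps control), whereas you collapse the whole position to a single $C$-chain via Corollary~\ref{amalchains} with the empty game in the role of its $G$, and then evaluate $v(C)=C$ directly. This is legitimate and non-circular, since Section~\ref{amalsect} precedes Theorem~\ref{allchains} and the amalgamation results depend only on the man-in-the-middle material, not on the theorem being proved. What each approach buys: your amalgamation argument makes (a) a genuine one-liner and showcases the equivalence $a+b\sim(a+b-4)$ as a computational tool, but it leans on the comparatively heavy man-in-the-middle machinery behind Proposition~\ref{propamalgamate}; the paper's induction is self-contained, elementary, and reuses the exact template needed again for Theorem~\ref{allloops}(a), which is presumably why the authors chose it.
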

\begin{remark} This theorem determines $v(G)$ uniquely for $G$ a simple
loony endgame with no loops, and furthermore (assuming we know the
controlled value of the game) it does it without having to
embark on a recursive procedure, computing values of various subgames.
Removing the need to ``go down the game tree'' in this way is
precisely the point of this paper.
\end{remark}
\begin{proof} (a) is an easy induction on $n$. For (b), induct on the
number of 3-chains, noting that if $G=3+H$ and $c'=cv(H)$ is the analogue
of the number $c$ for the game~$H$, then $c'=c+1$. If $c\geq 1$
then $c'\geq2$, so $v(H)=c'\geq2$ by the inductive hypothesis
and the controller should stay in control
giving $v(G)=v(H)-1=c$. On the other hand if $c\leq0$ then $c'\leq1$
so by the inductive hypothesis
$v(H)\in\{1,2\}\leq 2$ and the controller should lose control, giving
$v(G)=3-v(H)\in\{1,2\}$.
\end{proof}

We next deal with the case of simple loony endgames which are all loops.
This case is a little more complicated, because the 3-chain was the only
chain that needed to be dealt with separately above, whereas we must
deal with both 4-loops and 6-loops here as special cases. The following
theorem is the first time we need our assumption that all loops
are of even length (which will be true on a standard square dots
and boxes grid but may not be true in more generalised versions
of the game -- without this assumption then we would have to deal
with 5-loops and 7-loops as well).

Say $G$ is a disjoint union of long loops of even lengths $\ell_1$,
$\ell_2$, $\ell_3$,\ldots,$\ell_n$ with $n\geq1$. Set
$c=cv(G)=8+\sum_i(\ell_i-8)$
(and note that $c$ is even).
Let $f$ be the number of 4-loops in~$G$ and let $s$ be the number
of 6-loops.

\begin{theorem}\label{allloops} With notation as above,

(a) If $\ell_i\geq8$ for all $i$ then $v(G)=c$.

(b) If $c\geq2$ or $G$ is empty then $v(G)=c$.

(c) If $c\leq 0$ and $f=0$ and $G$ is non-empty then $v(G)\in\{2,4\}$ and
$v(G)\equiv c$~mod~4 (this congruence determines $v(G)$ uniquely).

(d) If $c\leq 0$ and $f\geq1$ then let $K$ denote $G$ minus all
the 4-loops (whose value is computable using (a)--(c) above).
If $v(K)\equiv2$~mod~4 then $v(G)=2$. Otherwise $v(G)\in\{0,4\}$
and $v(G)\equiv v(K)+4f$~mod~8.
\end{theorem}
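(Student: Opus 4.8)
The plan is to base everything on a single recursion. Since $G$ has no chains, Corollary~\ref{smallestchainorloop} guarantees that opening the smallest loop is an optimal move for the defender, so $v(G)=v(G;(\ell_1)_\ell)$ where $\ell_1$ denotes the smallest loop. Feeding this into the formula $v(H+n_\ell;n_\ell)=n-4+|v(H)-4|$ from Section~\ref{stayincontrol} gives the master recursion
$$v(G)=\ell_1-4+\bigl|v(G\setminus\ell_1)-4\bigr|,$$
where $G\setminus\ell_1$ is $G$ with one smallest loop deleted. Writing $c'=cv(G\setminus\ell_1)$ one has $c'=c-\ell_1+8$, so deleting a $4$-loop raises the controlled value by $4$, a $6$-loop by $2$, an $8$-loop not at all, and a larger loop lowers it. Each part of the theorem is an analysis of how this recursion behaves, organised by the size of $\ell_1$.

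For part (a) I would first amalgamate: by the loop-amalgamation corollary of Section~\ref{amalsect} all the loops (each of length $\geq8$) may be fused into a single loop of length $c=8+\sum_i(\ell_i-8)\geq8$, and a lone loop has value equal to its length (this is the master formula with the other component empty), so $v(G)=c$. Part (b) contains (a) as the special case $c\geq8$; for $c\geq2$ in general I would induct on $n$, first amalgamating all loops of length $\geq8$ so that at most one survives, and then applying the master recursion with $\ell_1\in\{4,6\}$ (or a single big loop). In each case $c'\geq2$, so the inductive hypothesis gives $v(G\setminus\ell_1)=c'$ and a short computation returns $v(G)=c$; the empty game is handled by hand. (If Lemma~\ref{lemmavcv}(a) is already available this step is immediate.)

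Part (c) is a clean induction. With $f=0$ and $c\leq0$ the game must contain a $6$-loop --- otherwise every loop has length $\geq8$ and $c\geq8$ --- so $\ell_1=6$ and the recursion reads $v(G)=2+|v(G\setminus\ell_1)-4|$. The key observation is that the map $x\mapsto2+|x-4|$ carries $\{2,4\}$ to $\{4,2\}$, so the property $v\in\{2,4\}$ propagates, and since $c'=c+2$ the two branches match the required congruence $v(G)\equiv c\bmod4$. The induction bottoms out in the single subcase $c'=2$, where part (b) supplies $v(G\setminus\ell_1)=2$.

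Part (d) is the real work. Here $f\geq1$ forces $\ell_1=4$, and since deleting a $4$-loop leaves $K$ (the non-$4$-loop part) unchanged, iterating the recursion amounts to adding the $f$ four-loops back to $K$ one at a time; opening a $4$-loop stays optimal at every stage, so
$$v(G)=\phi^{f}\bigl(v(K)\bigr),\qquad \phi(x)=|x-4|.$$
The orbit of $\phi$ is transparent: $2$ is a fixed point, $\{0,4\}$ is a $2$-cycle, and from any even starting value the iterates march down in steps of $4$ until they reach $2$ (when $v(K)\equiv2\bmod4$) or $\{0,4\}$ (when $v(K)\equiv0\bmod4$). The main obstacle is to show that the hypothesis $c\leq0$ forces $f$ to be at least the number of down-steps needed to reach this attracting set; this I would extract from $v(K)\leq4f$, which holds because $v(K)=cv(K)=c+4f\leq4f$ when $cv(K)\geq2$, and because $v(K)\in\{0,2,4\}\leq4f$ otherwise (using parts (b)/(c), or $v(K)=0$ when $K$ is empty). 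Once convergence is guaranteed, the fixed point gives $v(G)=2$ in the first case, while in the second the landing point of the $2$-cycle is governed by a single parity; bookkeeping that parity against the number of elapsed steps shows $v(G)\in\{0,4\}$ with $v(G)\equiv v(K)+4f\bmod8$, completing the proof.
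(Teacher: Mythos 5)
Your proposal is correct, and its spine---the optimality of opening the smallest loop (Corollary~\ref{smallestchainorloop}) fed into the recursion $v(G)=\ell_1-4+|v(G\setminus\ell_1)-4|$, then induction organised by the size of the smallest loop---is exactly the paper's. Two points where you genuinely diverge are worth noting. First, in (a) you amalgamate all the $\geq8$-loops into a single loop and evaluate a lone loop directly; the paper instead does a one-line induction on the number of loops. Both are fine, and your version makes the role of $c=8+\sum_i(\ell_i-8)$ pleasantly transparent. Second, in (d) you unroll the recursion into $v(G)=\phi^f(v(K))$ with $\phi(x)=|x-4|$ and then must separately prove convergence into the attracting set $\{0,2,4\}$, which you do via the bound $v(K)\leq 4f$ (correctly split by whether $cv(K)\geq2$, using the already-proved part (b), or not). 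The paper avoids this convergence step entirely: it inducts on $f$, writing $G=H+4_\ell$ and observing $cv(H)=c+4\leq4$, so parts (b), (c) and the inductive hypothesis force $v(H)\in\{0,2,4\}$ at \emph{every} stage, whence $\phi$ degenerates to $v(G)=4-v(H)$ and no marching-down analysis is needed; your orbit bookkeeping is essentially the paper's Lemma~\ref{quadsandmod8}, which it deploys only for the later corollaries. One caution: your parenthetical appeal to Lemma~\ref{lemmavcv}(a) in part (b) would be circular if taken as the primary argument, since the paper's proof of that lemma invokes Theorem~\ref{allloops}(b) for the all-loops case---your direct induction must remain the argument of record (in part (d) you correctly cite the theorem's own part (b) instead). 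Finally, in (c) you should say explicitly that $G\setminus\ell_1$ is non-empty before invoking the inductive hypothesis; this is immediate (the paper notes $G$ cannot be a single 6-loop since $cv(6_\ell)=6>0$; indeed $c\leq0$ with $f=0$ forces at least four 6-loops), but it is a hypothesis of part (c) and deserves a line.
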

\begin{proof}
Part (a) is an easy induction on number of loops, as in the previous
theorem. Part (b) follows by induction on $f+s$. The base case is (a),
and the inductive step proceeds as follows: if the length
of the smallest loop is~$t$ and $G=H+t_\ell$ then we know that opening
$t$ is optimal, and hence $v(G)=t-4+|v(H)-4|$. But $cv(H)=c-(t-8)\geq4$
so by the inductive hypothesis $v(H)=cv(H)\geq4$ and hence $v(G)=t-8+cv(H)=c$.

Part (c) is proved by induction on $s$. The case $s=0$ cannot occur, for
then~$G$ is a disjoint union of loops of length $\geq$ 8,
so its controlled value~$c$ must be at least 8.
The game cannot be one isolated 6-loop either, because we are
assuming $c\leq 0$. Hence we can write $G=6_\ell+H$ with $cv(H)=c+2\leq 2$.
Part (b) (if $cv(H)=2$) and the inductive hypothesis (if not)
implies $v(H)\in\{2,4\}\leq 4$ and
$v(H)\equiv c+2$~mod~4. Hence $v(G)=v(G;6_\ell)=2+|v(H)-4|=6-v(H)$
is congruent to $4-c$ and hence to~$c$ mod~4 (as $c$ is even).

Part (d) is similar -- an induction on $f$. Write $G=H+4_\ell$.
We have $cv(H)=4+c\leq 4$ so by (b) and the inductive hypothesis
we have $v(H)\in\{0,2,4\}$ and hence $v(G)=|4-v(H)|=4-v(H)$.
A case-by-case check now does the job; the hard work is
really in formulating the statement rather than checking its proof.
\end{proof}

\section{A general simple loony endgame: values and controlled values.}

Recall that the value of a simple loony endgame is a number which we
do not (at this stage in the argument)
know how to compute efficiently. However the controlled
value of such a game is very easy to compute. We are lucky then
in that the value and controlled value of a simple loony endgame
are by no means completely independent (for example they are congruent
mod~2, because both are congruent mod~2 to the total number of boxes
in the game). We give some more subtle relations
between these numbers here, which turn out to be crucial;
Lemma~\ref{lemmavcv}
and Corollary~\ref{corvcv} will be used again and again throughout
the rest of the paper.

\begin{lemma} Let $G$ be a simple loony endgame. Then $v(G)\geq cv(G)$.
\end{lemma}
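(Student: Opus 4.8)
The plan is to prove this by induction on the number of components of $G$, feeding everything through the two move-value formulas recorded in Section~\ref{stayincontrol}. Since the defender is to move and will choose which region to open so as to minimise the controller's net score, we have $v(G)=\min_Z v(G;Z)$, the minimum running over the components $Z$ of $G$; it therefore suffices to show $v(G;Z)\geq cv(G)$ for \emph{every} opening region $Z$. Fix such a $Z$ and write $G'=G\setminus Z$. If $G'$ is empty then $G=Z$ is a single chain or loop and $v(G;Z)=cv(G)$ by direct computation, so I may assume $G'$ is non-empty and that $v(G')\geq cv(G')$ by induction. The key bookkeeping fact will be that $fcv$ is additive over components, so $fcv(G)=fcv(G')+(\mathrm{size\ of\ }Z-4)$ for a chain and $fcv(G)=fcv(G')+(\mathrm{size}-8)$ for a loop; the whole argument then reduces to controlling how the terminal bonus $tb$ changes when $Z$ is removed.

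First I would treat $Z$ a chain of length $n$. The formula $v(G+n;n)=n-2+|v(G')-2|$ shows the controller may keep control, scoring $n-4+v(G')$, so $v(G;Z)\geq n-4+v(G')\geq n-4+cv(G')$ by the inductive hypothesis. Writing $cv(G)=fcv(G')+(n-4)+tb(G)$ and $cv(G')=fcv(G')+tb(G')$, the desired inequality reduces to $tb(G')\geq tb(G)$. A short check against the definition of the terminal bonus confirms this whenever $Z$ is a chain: deleting a chain never creates loops and never turns a game with a long chain or with no loops into anything with a smaller bonus, and deleting a chain from a $tb=6$ game leaves either $tb=6$ or $tb=8$. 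Hence $v(G;Z)\geq cv(G)$ in the chain case.

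For $Z$ a loop of length $t$ I would argue identically, using $v(G+t_\ell;t_\ell)=t-4+|v(G')-4|$ and the keep-control bound $v(G;Z)\geq t-8+v(G')$ together with the same monotonicity $tb(G')\geq tb(G)$, which again holds in all but one situation. I expect that one situation to be the genuine obstacle: when $tb(G)=6$ (so $G$ is made of long loops and $3$-chains only) and $Z$ is the \emph{unique} loop, leaving $G'$ a union of $3$-chains with $tb(G')=4<6$. Here keep-control falls exactly two boxes short, and instead I would use the take-all option $v(G;Z)\geq t-v(G')$. Since $G'$ is a union of $k\geq1$ three-chains, Theorem~\ref{allchains} gives $v(G')\leq 3$, and because $fcv(G')=-k$ we have $cv(G)=t-2-k$; thus $t-v(G')\geq t-3\geq t-(k+2)=cv(G)$, as needed. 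This recovers the ``last-but-one turn'' phenomenon flagged after the definition of $cv$: precisely in the $tb=6$ endgame the controller must be ready to cash in a loop outright rather than double-cross it.

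Collecting the chain and loop cases and taking the minimum over $Z$ then gives $v(G)\geq cv(G)$. The crux of the whole proof is the case analysis of how $tb$ behaves under deletion of a single component, and the isolated failure of monotonicity in the $tb=6$ regime, which is exactly where the take-all branch (and the bound $v\leq3$ for all-$3$-chain games) must be invoked.
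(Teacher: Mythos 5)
Your proof is correct, but it is organized quite differently from the paper's. The paper argues directly from the controller's side: it exhibits an explicit strategy (keep control until the end, scoring $fcv(G)+4$, or $fcv(G)+8$ when $G$ is all loops, which disposes of the $tb=4$ and $tb=8$ cases at once; in the $tb=6$ case, keep control until the last loop is opened, take all of it, and play on optimally in the residual $3$-chains), with no induction and no reduction to single-move inequalities. You instead run an induction on the number of components, reduce via $v(G)=\min_Z v(G;Z)$ to the one-step inequality $v(G;Z)\geq cv(G)$, and make the bookkeeping explicit through the additivity of $fcv$ and the monotonicity $tb(G\setminus Z)\geq tb(G)$ under component deletion -- a fact the paper never isolates, because its strategy-based computation sidesteps it. Your case check of that monotonicity is right (deleting a chain can only move $tb$ among $\{4,6,8\}$ upward or leave it fixed; deleting a loop fails only when $tb(G)=6$ and the loop is unique), and, pleasingly, your repair of the unique failure is mathematically the same move as the paper's $tb=6$ strategy: switch to the take-all branch $v(G;Z)\geq t-v(G')$ and invoke Theorem~\ref{allchains}(b) to bound the all-$3$-chain residue by $3$, so $t-3\geq t-2-k=cv(G)$ for $k\geq1$. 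What your route buys is a mechanical, locally checkable verification of the Bellman-type inequality that makes visible exactly where and why the terminal bonus misbehaves; what the paper's route buys is brevity in the easy cases and an explicitly playable controller strategy. The only loose ends in your write-up are trivial: the genuine base case is the empty game ($v=cv=0$), and your use of $v(G)=\min_Z v(G;Z)$ rests on the paper's convention that all loony moves within an isolated component have equal value -- both worth a sentence, neither a gap.
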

\begin{remark} This is stated without proof on p84 of~\cite{berl};
we give a proof here for completeness.
\end{remark}
\begin{proof} 
We exhibit a strategy for the controller which, against best play,
gives the controller a net gain of $cv(G)$; this suffices.
Because the definition of $cv(G)$ has several
cases our argument must also have several cases. 

The case where $G$ is empty is clear. If $G$ is a non-empty
game and the controller simply remains in control until the last
move of the game, where he takes all the boxes, then this is
a play where the controller scores $fcv(G)+4$ or $fcv(G)+8$
depending on whether the last region opened was a loop or a chain.
We deduce that $v(G)\geq fcv(G)+4$ in all cases and that $v(G)\geq fcv(G)+8$
if $G$ is composed of a non-zero number of loops. This completes
the proof in all cases other than those with $tb(G)=6$.

So now let us assume that $G$ comprises at least one loop, at least
one 3-chain,
and that all chains are 3-chains, so $cv(G)=fcv(G)+6$.
Here is a strategy for the controller: keep control until the last
loop is opened, and then take all of it (and play optimally afterwards;
this is easy by Theorem~\ref{allchains}). We check that this guarantees
a score of at least $cv(G)$. If the last loop is opened on the last
move then the controller gets a score of $fcv(G)+8>cv(G)$.
If the last loop is opened earlier, and there are $n\geq1$ 3-chains
left, then after the controller has taken all of the loop
he has a net score of $fcv(G)+8+n\geq fcv(G)+9$ and is about to play a loony
move in a position comprising entirely of 3-chains, which has
value at most~3 by Theorem~\ref{allchains}(b), so his net gain
with this line is at least $fcv(G)+9-3=fcv(G)+6=cv(G)$.
\end{proof}
Part (a) of the next lemma is mentioned on p86 of~\cite{berl}.

\begin{lemma}\label{lemmavcv} Let $G$ be a simple loony endgame.

(a) If $cv(G)\geq2$ then $v(G)=cv(G)$.

(b) If $cv(G)<2$ then $0\leq v(G)\leq 4$, and $v(G)\equiv cv(G)$~mod~$2$.

(c) If $cv(G)<2$ and if $G$ has a 3-chain then $v(G)\leq v(G;3)\leq 3$.

\end{lemma}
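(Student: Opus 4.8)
The three parts share their lower bounds: $v(G)\ge cv(G)$ by the previous lemma, and $v(G)\ge 0$ since the value of a loony endgame is never negative; moreover $v(G)\equiv cv(G)\pmod 2$ has already been observed. So in every part the genuine content is an upper bound, and I would prove all three simultaneously by strong induction on the number of components of $G$, taking as base cases the positions that are entirely chains or entirely loops, where Theorem~\ref{allchains} and Theorem~\ref{allloops} give (a)--(c) at once. It then remains to treat a mixed $G$ (containing at least one chain and at least one loop), where by Corollary~\ref{smallestchainorloop}(a) the defender may restrict attention to opening the smallest chain or the smallest loop, and I would feed the resulting subgame $H$ (which has one fewer component) into the inductive hypothesis through the recursion formulas $v(H+n;n)=n-2+|v(H)-2|$ and $v(H+m_\ell;m_\ell)=m-4+|v(H)-4|$ established in Section~\ref{stayincontrol}.

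Part (c) is the cleanest and I would dispatch it first. The inequality $v(G)\le v(G;3)$ is immediate, since opening a $3$-chain is one of the defender's options and $v(G)$ is the minimum over those options. Writing $H=G\backslash 3$, the chain formula gives $v(G;3)=1+|v(H)-2|$, so $v(G;3)\le 3$ is exactly the assertion $0\le v(H)\le 4$. The lower bound holds because $H$ is again a (possibly empty) simple loony endgame; for the upper bound I would apply the inductive hypothesis to $H$: if $cv(H)<2$ then part~(b) gives $v(H)\le 4$, while if $cv(H)\ge 2$ then part~(a) gives $v(H)=cv(H)$ and it suffices to check $cv(H)\le 4$. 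Here a short terminal-bonus computation does the job: deleting a $3$-chain changes the fully controlled value by $+1$, so $cv(H)-cv(G)=1+(tb(H)-tb(G))$, and since $G$ contains a $3$-chain we have $tb(G)\in\{4,6\}$. Running through the admissible pairs $(tb(G),tb(H))$, and ruling out the only dangerous jump $(4,8)$ (which would force $G$ to be loops plus $3$-chains and hence $tb(G)=6$), one finds $tb(H)-tb(G)\le 2$, so $cv(H)\le cv(G)+3\le 4$ in every case with $cv(G)<2$.

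For the upper bounds in parts~(a) and~(b) I would again open the smallest loop or the smallest chain and read off $v(G;\cdot)$ from the recursion, controlling the outcome by tracking how $cv$ and the terminal bonus move under removal of that component. The favourable observation is that deleting a loop from a mixed position never raises the terminal bonus, so opening the smallest loop yields $v(G;m_\ell)=cv(G)-(tb(G)-tb(H))\le cv(G)$ whenever the inductive hypothesis makes $v(H)\ge 4$; and when $G$ has a $3$-chain, part~(c) already caps $v(G)$ at $3\le 4$, which settles (b) and feeds back usefully into (a).

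The main obstacle --- and, as the introduction warns, the real heart of the matter --- is precisely the choice of which component to open in the residual mixed configurations: namely when the chosen deletion leaves a subgame $H$ with $v(H)<4$, or with $cv(H)$ very negative (for instance a single huge loop sitting next to small chains, where opening that loop is disastrous). In these cases neither ``open the smallest loop'' nor ``open the smallest chain'' is automatically correct, and one must argue, organised by the terminal-bonus type of $G$, that at least one of the two admissible openings realises the bound. I expect the bookkeeping of these terminal-bonus jumps, rather than any single clever device, to be the crux of the argument.
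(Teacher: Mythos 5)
Your part (c) is essentially the paper's argument and is correct: the same reduction $v(G;3)=1+|v(H)-2|$ with $H=G\backslash 3$, and the same terminal-bonus bookkeeping giving $cv(H)\leq cv(G)+3\leq 4$. But for parts (a) and (b) the proposal has a genuine gap, and you say so yourself: the paragraph ending ``I expect the bookkeeping of these terminal-bonus jumps\ldots to be the crux of the argument'' defers exactly the content of the lemma. Two concrete problems. First, your one quantitative claim for (a) --- that opening the smallest loop gives $v(G;m_\ell)=cv(G)-(tb(G)-tb(H))\le cv(G)$ ``whenever the inductive hypothesis makes $v(H)\ge 4$'' --- is not right as stated: that identity needs $v(H)=cv(H)$ (i.e.\ $cv(H)\geq2$ and part (a) for $H$), not merely $v(H)\geq4$; if $v(H)=4$ arises from part (b) with $cv(H)<2$, the formula $v(G;m_\ell)=m-8+v(H)=m-4$ need not be bounded by $cv(G)$. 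Second, your proof of (b) covers only games containing a 3-chain (via (c)); when $cv(G)<2$ and $G$ has no 3-chain, $G$ must contain a 4-loop or a 6-loop, and the 6-loop case requires a device you never identify: one needs the lower bound $v(H)\geq2$ for $H=G\backslash 6_\ell$ (true because $H$ has no 3-chains or 4-loops, so any defender move sacrifices at least two boxes before the keep/lose-control decision) in order to conclude $v(G)\leq 2+|v(H)-4|\leq 4$; without it, $2+|v(H)-4|$ could be as large as $6$.

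The paper's resolution of (a) is also structurally different from your plan, and the difference is the missing idea. You propose to decide which of the smallest chain or smallest loop is optimal (Corollary~\ref{smallestchainorloop}) and compute $v(G)$ as the minimum; the paper never decides optimality at all. Since $v(G)\geq cv(G)$ is already known, it suffices to exhibit \emph{some} defender line whose net loss is $cv(G)$, and the paper builds one by opening the cheap components first: when $G$ contains a chain of length at least~4 (so $tb$ is frozen at~4), the defender opens all 3-chains, 4-loops and 6-loops, then the remaining loops, then the chains, checking at each step via additivity of $fcv$ that the residual subgame has $cv\geq2$ (after a chain) resp.\ $\geq4$ (after a loop), which by $v\geq cv$ forces the controller to keep control, so the loss telescopes to $fcv(G)+4$. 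The $tb(G)=6$ case (loops present, all chains 3-chains) is handled by holding one 3-chain in reserve, with the terminal positions $3+n_\ell$, $n\geq6$, checked directly. In particular your ``disastrous'' configurations (a huge loop next to small chains) never trouble this strategy, because the defender is not required to play well, only to play a line certified by controlled-value bookkeeping --- replacing an optimality analysis by a sufficient strategy is precisely what makes the mixed case tractable, and it is what your sketch lacks.
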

\begin{proof}
(a) If there are no 3-chains or 4-loops or 6-loops in~$G$
then the fully controlled value of any non-empty subgame of~$G$ is always
at least~0, and so the controlled value is always at least~4. By the
preceding lemma the value of the subgame is always at least~4, and
hence the controller should initially remain in control
as the game progresses. However the value of the empty game is $0<2$,
and hence the controller should stay in control until the last move
of the game, whereupon he takes everything and scores $fcv(G)+4$ or $fcv(G)+8$
depending on whether the final move was in a loop or a chain. The defender's
optimal play is hence to ensure that the final move is in a chain, if
there are any chains, and we have proved $v(G)=cv(G)$ in this case.

The general case though is more complex. We prove the result by induction
on the number of components of~$G$. The strategy of the proof then is
for the defender to find a move in a component $C$ of $G$ such that
$cv(G\backslash C)\geq 2$ if $C$ is a chain and $cv(G\backslash C)\geq 4$
if $C$ is a loop. If the defender can always do this
then he is ``flying the plane''
(using the notation in~\cite{bs}) without crashing; this forces the controller
to stay in control at all times and this will prove the results. The
annoying added technicality
is that although it is easy to keep track of fully-controlled
values (the fully-controlled-value function is additive), the terminal
bonus function is not so well-behaved, so we need to check that
we can control this ``error term'' at all times.

One case that we have already dealt with is when there are no chains at all.
Then the result follows from Theorem~\ref{allloops}(b). Similarly
the case where there are no loops at all is Theorem~\ref{allchains}(b).

Another simple case is when $G$ contains a chain of length at least~4.
In this case the terminal bonus is always~4 and this will not change
if we remove some 3-chains, 4-loops and 6-loops. We know $v(G)\geq cv(G)$
so it suffices to find a strategy for the defender which results in a net
loss for him of only $cv(G)$. The defender can start by opening all
the 3-chains, 4-loops and 6-loops. For $cv(G)\geq2$ by assumption,
so if a 3-chain is opened then the controlled value goes up by~1
and hence the value of the new game is at least~3 and the controller
will stay in control. Similarly if the defender opens a 4-loop
or a 6-loop then the controlled value of the new game is at least~4
and hence the controller should stay in control. When all the 3-chains,
4-loops and 6-loops are opened the defender just opens all the loops
and then finally all the chains, and it is not hard to check that he
makes a net loss of $fcv(G)+4=cv(G)$.

The one remaining case is when there is at least one chain, at least
one loop, and all chains are 3-chains. In this case we see that the
defender can open all but one of the 3-chains and the controller must
stay in control because if $G=3+H$ with $H$ containing at least
one 3-chain then $cv(H)=1+cv(G)\geq3$, so the controller will stay
in control. When there is only
one 3-chain left, the defender starts opening all the not-very-long
loops (that is, 4-loops and 6-loops), and as above checks easily that the
controller has to stay
in control, until either there are no not-very-long loops left, or there
is only one loop left. The case of a 4-loop cannot occur as $cv(3+4_\ell)=1<2$.
The result is easily checked for $G=3+n_\ell$ with $n\geq6$.
Our final case is a 3-chain and
more than one very long loop. One checks that the defender can just
pick off the smallest very long loop, because the controlled value of
any game comprising a 3-chain and at least one very long loop is at least~5
so again the controller must stay in control.

(b) and (c): We prove both statements together, again by induction
on the number of components of~$G$. Of course the congruence mod~2
is automatic, as is $v(G)\leq v(G;3)$ in part (c) and $v(G)\geq0$ (as $G$
is loony); the issues are proving that $v(G)\leq 4$
and that furthermore $v(G;3)\leq 3$ if~$G$ has a 3-chain.
The case of $G$ empty is clear
so let us assume $G$ is non-empty. Then the assumption $cv(G)<2$ implies
that $G$ must contain either a 3-chain, a 4-loop or a 6-loop; we deal
with each case separately.

The easiest case is when $G=H+L$ with $L$ a 4-loop. Then $H$ must
be non-empty, so $tb(G)=tb(H)$
and hence $cv(H)=cv(G)+4\leq 5$, so $0\leq v(H)\leq 5$ by (a) and our inductive
hypothesis, and hence $v(G;L)=|v(H)-4|\leq 4$ and hence $v(G)\leq 4$.

The next case we consider is when $G=H+3$. Again~$H$ must
be non-empty. Removing the 3-chain
can sometimes make the terminal bonus increase (from~6 to~8) but
we certainly have $cv(H)\leq cv(G)+3\leq 4$ so, by the inductive
hypothesis, $v(H)\leq 4$
and hence $v(G)\leq v(G;3)=1+|v(H)-2|\leq 3$, and this proves (c).

The final case is when $G$ has no 4-loops or 3-chains, but has at
least one 6-loop. If $G=H+L$ with $L$ the 6-loop then $cv(G)<2$
implies $H$ is non-empty, so one
checks that $tb(G)=tb(H)$, and hence $cv(H)\leq 3$ so $v(H)\leq 4$
by (a) and our inductive hypothesis. Moreover $H$ has no 3-loops
or 4-chains, hence $v(H)\geq2$ (as any move by the defender sacrifices
at least 2 boxes before the decision of whether to keep control is
made), giving $v(G)\leq v(G;L)=2+|v(H)-4|\leq 4$.
\end{proof}
\begin{remark}\label{477}
Part (a) of the preceding lemma
becomes false if we drop the assumption that loops
have even length. For example if $G$ is the game $4+7_\ell+7_\ell$,
then $cv(G)=2$ but we claim $v(G)=4$. Indeed, $v(4+7_\ell)=3$ (open
the 7-loop), so $v(4+7_\ell+7_\ell;7_\ell)=4$ (the controller should
lose control because $v(4+7_\ell)<4)$. Similarly $v(7_\ell+7_\ell)=6$,
so $v(4+7_\ell+7_\ell;4)=6$ (the controller should this time keep control).
We deduce that $v(4+7_\ell+7_\ell)=4$.
\end{remark}
The preceding lemma showed how the controlled value influences the
value of a simple loony endgame. This corollary of it shows how the value
influences the controlled value.

\begin{corollary}\label{corvcv} Let $G$ be a simple loony endgame.

(a) If $v(G)\geq 5$ then $cv(G)=v(G)$.

(b) If $v(G)=4$ and if $G$ has a 3-chain then $cv(G)=v(G)$.

(c) If $G=H+n+3$ with $n\geq3$ and if $v(G;3)\geq 4$ then $cv(G)=v(G)=v(G;3)$.
\end{corollary}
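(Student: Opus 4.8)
The plan is to deduce all three parts from Lemma~\ref{lemmavcv}, the inequality $v(G)\geq cv(G)$ from the preceding lemma, and the chain-opening formula $v(G+n;n)=n-2+|v(G)-2|$ recalled in Section~\ref{stayincontrol}. Parts (a) and (b) I would dispatch by contradiction. For (a), since $v(G)\geq cv(G)$ always holds, it suffices to exclude $cv(G)<2$; but in that case Lemma~\ref{lemmavcv}(b) would force $v(G)\leq 4$, contradicting $v(G)\geq 5$. Hence $cv(G)\geq 2$ and Lemma~\ref{lemmavcv}(a) gives $v(G)=cv(G)$. For (b), if instead $cv(G)<2$, then using that $G$ has a 3-chain, Lemma~\ref{lemmavcv}(c) yields $v(G)\leq v(G;3)\leq 3$, contradicting $v(G)=4$; so again $cv(G)\geq 2$ and Lemma~\ref{lemmavcv}(a) finishes the job.

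The substance is in part (c). I would set $G'=H+n$, so that $G=G'+3$ and $G'$ is itself a simple loony endgame which already contains a chain (the $n$-chain, with $n\geq3$). Opening the 3-chain is the opening of a chain, so the formula from Section~\ref{stayincontrol} gives $v(G;3)=v(G'+3;3)=1+|v(G')-2|$. Since $v(G;3)\geq 4$ and $v(G')\geq 0$ (as $G'$ is loony), this forces $|v(G')-2|\geq 3$ and hence $v(G')\geq 5$. Now part (a), applied to $G'$, gives $cv(G')=v(G')\geq 5$; in particular the controller keeps control after the 3-chain is opened, so $v(G;3)=1+(v(G')-2)=v(G')-1=cv(G')-1$.

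The main obstacle is then to relate $cv(G)$ to $cv(G')$. The fully controlled value is additive, so $fcv(G)=fcv(G')+(3-4)=fcv(G')-1$; the difficulty is that the terminal bonus is not additive, and I must verify $tb(G)=tb(G')$. This is the one genuinely case-based point, but it is easy: because $G'$ contains a chain it is never in the ``loops but no chains'' category, so $tb(G')\in\{4,6\}$, and passing from $G'$ to $G=G'+3$ adds only a 3-chain, which alters neither the presence of a long (length~$\geq4$) chain, nor the presence of a loop, nor whether all chains are 3-chains. Thus $G$ lies in exactly the same terminal-bonus category as $G'$, giving $tb(G)=tb(G')$ and hence $cv(G)=fcv(G)+tb(G)=cv(G')-1\geq 4$. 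Since $cv(G)\geq2$, Lemma~\ref{lemmavcv}(a) yields $v(G)=cv(G)=cv(G')-1=v(G;3)$, which is the desired chain of equalities and completes the proof.
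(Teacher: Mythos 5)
Your proof is correct and takes essentially the same route as the paper's: parts (a) and (b) by combining $v(G)\geq cv(G)$ with Lemma~\ref{lemmavcv}, and part (c) by using $v(G;3)=1+|v(H+n)-2|$ to force $v(H+n)\geq 5$, applying part (a) to $H+n$, and transferring to $cv(G)$ via additivity of $fcv$ together with invariance of the terminal bonus. Your explicit case-check that $tb(G)=tb(H+n)$ correctly fills in the one detail the paper only signals in its closing remark about needing the $n$-chain, so nothing is missing.
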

\begin{proof} Parts (a) and (b) follow immediately from the preceding
lemma. 
Let us prove (c). Note first that $cv(G)\leq v(G)\leq v(G;3)$, so all
we need to show is $v(G;3)=cv(G)$.
We have $4\leq v(G;3)=1+|v(H+n)-2|$, and because
$v(H+n)\geq0$ we must have $v(H+n)\geq 5$. By (a) we have $cv(H+n)=v(H+n)$,
and hence $v(G;3)=v(H+n)-1=cv(H+n)-1=cv(G)$ and we are done. Note that it
is in the very last equality where we need the existence of the $n$-chain
(to stop the terminal bonus from changing).
\end{proof}

\section{Simple loony endgames with no 3-chains.}

We return to verifying the correctness of our two algorithms, this
time under the assumption that our simple loony endgame $G$ has no 3-chains. 
Our algorithm giving an optimal move for the defender is very
simple in this situation.

\begin{theorem}\label{no3chains} In a simple loony endgame with at least one
loop but no 3-chains, opening the smallest loop is an optimal move.
\end{theorem}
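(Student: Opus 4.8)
The plan is to reduce the statement to a single inequality and then dispatch it with the value/controlled-value machinery of the previous section. First, if $G$ has no chains at all then every legal move opens a loop and the claim is immediate from Corollary~\ref{smallestchainorloop}(a). So I may assume $G$ has a chain; since there are no 3-chains its smallest chain $c$ has length $c\geq 4$, and I write $t$ for the length of the smallest loop. By Corollary~\ref{smallestchainorloop}(a), opening the smallest loop dominates opening any larger loop and opening $c$ dominates opening any larger chain, so the only thing left to prove is that opening the smallest loop is at least as good for the defender as opening the smallest chain, i.e.
\[
v(G;t_\ell)\leq v(G;c).
\]

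Next I would expand both sides using the move-value formulas of Section~\ref{stayincontrol}: writing $G_c=G\setminus c$ and $G_t=G\setminus t_\ell$, these read $v(G;c)=c-2+|v(G_c)-2|$ and $v(G;t_\ell)=t-4+|v(G_t)-4|$, so the whole problem is to control the two subgame values $v(G_c)=v(A+t_\ell)$ and $v(G_t)=v(A+c)$, where $A=G\setminus\{c,t_\ell\}$. To keep the bookkeeping light I would first invoke Corollary~\ref{amalchains} and its loop analogue, together with Lemma~\ref{amalgoptimal}, to amalgamate all chains into a single chain and all loops of length $\geq 8$ into a single loop, leaving only the genuinely special 4-loops and 6-loops to track. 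Then I split on the size of the two subgame controlled values. When a subgame's controlled value is $\geq 2$, Lemma~\ref{lemmavcv}(a) lets me replace $v$ by the corresponding $cv$, which is additive up to the terminal bonus; substituting the resulting closed forms collapses the inequality to an identity in which the dependence on $c$ and $t$ cancels (exactly as in the baseline case $A$ empty, where one computes directly $v(G;c)-v(G;t_\ell)=4$). When a controlled value is $<2$, Lemma~\ref{lemmavcv}(b) pins the subgame value into $\{0,2,4\}$, and since neither $G_c$ nor $G_t$ contains a 3-chain, the observation used in the proof of Lemma~\ref{lemmavcv}(b)—that every defender move in such a position sacrifices at least two boxes before the keep-or-lose-control decision—forces the value into $\{2,4\}$ whenever there are no 4-loops, after which the inequality is a finite check; here the hypothesis $cv(G)<2$ is what simultaneously bounds $t$ and $c$, so no size can run away and defeat the estimate.

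The main obstacle is the terminal-bonus bookkeeping. The function $fcv$ is additive but $tb$ is not: deleting the chain $c$ can turn an ``all chains long, plus loops'' position into an ``all loops'' position and bump the terminal bonus from $4$ to $8$, while deleting the smallest loop, or the presence of $4$-loops, can likewise shift $tb$ among the values $4,6,8$. Consequently $cv(G_c)$ and $cv(G_t)$ are \emph{not} simply $cv(G)$ shifted by $c-4$ and $t-8$, and I must split according to whether $A$ still contains a long chain, whether $A$ contains loops, and whether $4$-loops are present—precisely the cases distinguished in the definition of $tb$ and in Theorem~\ref{allloops}(c),(d). Each individual case is then a routine substitution into the two displayed formulas; the real care is in organising the cases at the boundaries where $cv(G_c)$ or $cv(G_t)$ crosses the threshold $2$, so that one knows whether part (a) or part (b) of Lemma~\ref{lemmavcv} governs the subgame value.
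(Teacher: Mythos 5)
Your opening reduction (to the single inequality $v(G;t_\ell)\leq v(G;c)$ via Corollary~\ref{smallestchainorloop}, and the use of chain amalgamation to collapse to one chain) matches the skeleton of the paper's proof, which indeed finishes with Lemma~\ref{amalgoptimal}. But the core of your plan fails at exactly the hard spot. Consider $t=c=4$, so $G=A+4_\ell+4$ with $A$ containing further 4-loops, in the regime where $cv(A+4)<2$. Your tools pin $v(A+4)\in\{0,2,4\}$ (Lemma~\ref{lemmavcv}(b)) and compute $v(A+4_\ell)$ exactly by Theorem~\ref{allloops}, which may well return $2$. The configuration $v(A+4)=0$, $v(A+4_\ell)=2$ would give $v(G;4_\ell)=4>2=v(G;4)$ and refute the theorem, and nothing you state excludes it: mod-2 parity is consistent with it, and this is precisely the case where your observation ``every move sacrifices at least two boxes'' breaks down because of the 4-loops. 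The paper closes this with a sharper congruence in Proposition~\ref{one4chain}: since every component of $A+4_\ell+4$ has \emph{even} size, every subgame value is even, hence $v(G;4)$ and $v(G;4_\ell)$ are congruent mod~4, which kills the bad configuration outright. You cannot instead just compute $v(A+4)$ in the 4-loop case, because the natural tool, Corollary~\ref{vno3chains}(d), is itself derived from Theorem~\ref{no3chains} -- that route is circular.

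There is a second structural gap created by your own amalgamation step: the combined chain $c=4+\sum_i(c_i-4)$ can be odd (chains of lengths 4 and 5 amalgamate to a 5-chain), and for odd $c$ the subgame values are odd, so the mod-4 refinement is unavailable even in principle. The paper avoids this by proving the single-chain case only for $c=4$ (where all components are even) and then extending to arbitrary $n\geq4$ by a mechanism your proposal has no counterpart to: the Lipschitz bound $g(n)\leq g(4)+n-4$ for $g(n)=v(H+n;m_\ell)$, derived from Lemma~\ref{easyvals}, fed into the contradiction chain $g(n)>v(H+n;n)=n-4+v(H+4;4)\geq n-4+v(H+4;m_\ell)=n-4+g(4)\geq g(n)$; only after that does amalgamation enter. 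Two smaller defects: your claim that the small-controlled-value hypothesis ``bounds $t$ and $c$'' is false for $c$ (e.g.\ $100+27\cdot 4_\ell$ has $cv=-8$), although that particular case happens to be harmless since $v(G;c)\geq c-2$; and amalgamating all loops of length $\geq 8$ into one is not justified when \emph{all} loops have length $\geq 8$, since then ``open the smallest loop'' is a move inside the amalgamated component, which the loop analogue of Lemma~\ref{amalgoptimal} (transferring only moves in the untouched part $G$) does not cover.
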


\begin{remark} Note that our proof \emph{does}
assume that the loops have even length. Indeed, if~$G$ is the strings-and-coins
game $G=4+4_\ell+4_\ell+7_\ell+7_\ell$ then opening the smallest loop
is not optimal. For $v(4+7_\ell+7_\ell)=4$ (see remark~\ref{477}),
hence $v(4+4_\ell+7_\ell+7_\ell)=0$ (as the value is non-negative but
opening the 4-loop has value~0) so $v(G;4_\ell)=4$; however
$v(4_\ell+4_\ell+7_\ell+7_\ell)$ is easily checked to be~2 and
hence $v(G;4)=2<4$ and opening the $4$-chain is better than
opening the 4-loop.
\end{remark}

Before we prove Theorem~\ref{no3chains}, we verify it in the special case where
the game has one 4-chain and no other chains at all. We then deduce
the result in general by using our technique of amalgamating
chains developed in Section~\ref{amalsect}.

\begin{proposition}\label{one4chain}
Let $G=K+L+4$ be a simple loony endgame consisting of a 4-chain, a loop~$L$,
and a (possibly empty) collection of loops~$K$ each of length at least
that of~$L$. Then opening~$L$ is an optimal move.
\end{proposition}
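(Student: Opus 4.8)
The plan is to use Corollary~\ref{smallestchainorloop}(a): since the $4$-chain is the only chain and $L$ is the smallest loop, an optimal move is either opening the $4$-chain or opening $L$, so it suffices to prove $v(G;L)\le v(G;4)$. Writing $\ell$ for the length of $L$ and using the control formulas of Section~\ref{stayincontrol}, we have $v(G;4)=2+|v(K+L)-2|$ and $v(G;L)=(\ell-4)+|v(K+4)-4|$, so everything reduces to understanding the two auxiliary values $v(K+4)$ (loops plus a single $4$-chain) and $v(K+L)$ (a game of loops only). Note $cv(K+4)=cv(G)+(8-\ell)$ and $cv(K+L)=cv(G)+4$.

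First I would dispose of the case $cv(G)\ge 2$. Here a short check shows $cv(K+4)\ge 4$ in every subcase: if $\ell\ge 8$ then every loop of $K$ has length $\ge 8$, so $cv(K+4)=4+\sum_K(\ell_K-8)\ge 4$, while if $\ell<8$ then $cv(K+4)=cv(G)+(8-\ell)\ge 2+2=4$. Lemma~\ref{lemmavcv}(a) then gives $v(K+4)=cv(K+4)\ge 4$, so $|v(K+4)-4|=cv(K+4)-4=\sum_K(\ell_K-8)$ and hence $v(G;L)=(\ell-4)+\sum_K(\ell_K-8)=cv(G)$. Since $cv(G)\ge 2$ forces $v(G)=cv(G)$ and $v(G)=\min_m v(G;m)\le v(G;4)$ always, opening $L$ attains the optimum.

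The substance is the case $cv(G)\le 0$. Because the loops of $K$ all have length $\ge\ell$, the inequality $cv(G)=\ell-4+\sum_K(\ell_K-8)\le 0$ forces $\ell<8$, so $L$ is a $4$-loop or a $6$-loop. In either subcase $cv(K+4)\le 4$ and $cv(K+L)=cv(G)+4\le 4$, so Lemma~\ref{lemmavcv} and Theorem~\ref{allloops} confine both $v(K+4)$ and $v(K+L)$ to $\{0,2,4\}$; thus $|v(K+4)-4|\in\{0,2,4\}$, while $v(G;4)\in\{2,4\}$. The desired inequality $v(G;L)\le v(G;4)$ now becomes a finite comparison, and the only way it can fail is through a ``parity-bad'' pairing of the two auxiliary values: concretely $v(K+4)=0$ together with $v(K+L)=2$ when $\ell=4$, or $v(K+4)=2$ together with $v(K+L)=2$ when $\ell=6$. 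In the $\ell=6$ subcase $K+4$ and $K+L$ contain no $3$-chains and no $4$-loops, so the principle that the controller grabs at least two boxes off any first loony move already gives $v(K+4),v(K+L)\ge 2$.

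Ruling out these bad pairings is the main obstacle, and it is exactly where the even-length hypothesis bites. I expect to handle it by an induction on the number of loops in $K$ that mirrors the proof of Theorem~\ref{allloops}: peeling off one short loop relates $v(K+4)$ and $v(K+L)$ to the corresponding auxiliary values of a game with one fewer loop, and the congruences recorded in Theorem~\ref{allloops}(c),(d) (for instance $v\equiv v(K)+4f \pmod{8}$) force $v(K+4)$ and $v(K+L)$ into compatible residue classes, excluding precisely the bad combinations. In effect the arithmetic of controlled values keeps the two auxiliary values synchronised mod~$4$ (or mod~$8$ once $4$-loops are present), so the finite comparison always resolves as $v(G;L)\le v(G;4)$, frequently with equality, and opening $L$ is optimal.
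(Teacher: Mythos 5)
Your skeleton is sound and largely parallels the paper's: the reduction via Corollary~\ref{smallestchainorloop} to comparing $v(G;L)$ with $v(G;4)$, the control formulas, the disposal of $cv(G)\geq2$ (where indeed $v(G;L)=cv(G)=v(G)$), and the identification of exactly two potentially bad pairings are all correct. (The jump from $cv(G)\geq2$ to $cv(G)\leq0$ is harmless but should be flagged: $cv(G)$ is even because every component of $G$ has even size.) The genuine gap is the exclusion step, which you explicitly leave as an expectation, and the mechanism you sketch for it is mis-stated in precisely the case where it matters. The two auxiliary values are \emph{not} ``synchronised mod~4'': since every component of $G$ has even size, every subgame has even value, so each move-value is congruent mod~4 to the number of boxes it governs, giving $v(K+4)\equiv B_K\pmod 4$ but $v(K+L)\equiv B_K+\ell\pmod 4$, where $B_K$ is the box total of $K$. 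When $L$ is a $6$-loop the two auxiliary values must therefore \emph{differ} by $2$ mod~$4$, and it is exactly this offset that kills the pairing $v(K+4)=v(K+L)=2$; if they were merely congruent mod~4, as you assert, that pairing would survive and your finite comparison would not close. (The mod~$8$ remark is a red herring; nothing beyond mod~$4$ is needed.)

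There is also a structural problem with your proposed route to the congruence. Theorem~\ref{allloops} controls only $v(K+L)$, an all-loops game; to ``peel loops off'' $K+4$ you need to know an optimal move in a loops-plus-one-$4$-chain position, which is the very proposition being proved, so your induction would have to be organised as a simultaneous induction on the number of loops (possible, but not what you wrote, and it costs extra case analysis). The paper sidesteps all of this with a one-line observation applied to the move-values directly: $v(G;4)$ and $v(G;L)$ are congruent mod~$4$, both being congruent to the total box count of $G$ because all subgame values are even; hence, once the case $v(G;L)>4$ is handled separately (there the paper shows $v(G;L)=cv(G)=v(G)$ by a short case check on the length of $L$), the only conceivable failure is $v(G;4)=0$ with $v(G;L)=4$, which is impossible since $v(G;4)=2+|2-v(K+L)|\geq2$. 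Your argument becomes correct, and essentially identical to the paper's, once you replace the conjectured induction by this parity observation at the level of move-values.
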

\begin{proof} Let us first deal with the case $v(G;L)>4$.
In this case we can even show
that $v(G;L)=cv(G)=v(G)$.
We will show this via a case-by-case check on the size of~$L$.
Recall that by definition of a simple loony endgame, $L$ and the
loops in~$K$ have even length, and hence $v(G;L)$ is even and hence
at least~6.

If $L$ has size~4, then $v(G;L)=|4-v(K+4)|\geq6$, and hence $v(K+4)\geq 10$,
so $cv(K+4)=v(K+4)\geq 10$ by Corollary~\ref{corvcv}(a),
and in particular $K$ is non-empty. We deduce $cv(G)=cv(K+4)-4\geq6$,
and now using Lemma~\ref{lemmavcv}(a)
we have $v(G)=cv(G)=cv(K+4)-4=v(K+4)-4=|4-v(K+4)|=v(G;L)$ in this case.

If instead $L$ is a 6-loop then we have $6\leq v(K+4+L;L)=2+|v(K+4)-4|$,
so $v(K+4)=0$ or $v(K+4)\geq8$. However
$v(K+4)=0$ is impossible, because $K+4$ is a non-empty loony
endgame with no 4-loops (recall $L$ is the smallest loop so
there are no 4-loops in~$K$). Hence $v(K+4)\geq8$ and the argument proceeds
just as in the case of a 4-loop above, the conclusion
being $v(G)=cv(G)=cv(K+4)-2=v(K+4)-2=2+|4-v(K+4)|=v(G;L)$.

The final case, under the $v(G;L)>4$ assumption, is
if $L$ is a loop of length $\ell\geq 8$. Then all loops have
length at least~8, so $fcv(G)\geq0$
and $fcv(K+4)\geq0$, hence $cv(G)$ and $cv(K+4)$ are both at least~4
and again we see $v(G)=cv(G)=\ell-8+cv(K+4)=\ell-8+v(K+4)=v(G;L)$.

Our conclusion so far is that if $v(G;L)>4$ then $v(G)=v(G;L)$.
We now treat the remaining possibilities.
Because $G$ is a simple loony endgame, the optimal move is
either in the smallest chain or the smallest loop by
Corollary~\ref{smallestchainorloop},
and hence $v(G)=\min\{v(G;4),v(G;L)\}$. Note also that $v(G;4)$ and
$v(G;L)$ are congruent mod~4 (because both are congruent mod~4 to the total
number of boxes in~$G$; this is because every component of~$G$
has even size and hence the value of any subgame of~$G$ is even).
Hence the only way that the Proposition can fail is if $v(G)=v(G;4)=0$
and $v(G;L)=4$. But this cannot happen as $v(G;4)=2+|2-v(K+L)|\geq 2$.
\end{proof}

\begin{proof}[Proof of Theorem~\ref{no3chains}]
Let us first consider a game $H$ of dots and boxes consisting
entirely of loops, with $m_\ell$ in $H$ the smallest loop.
Let us consider the function $g:\Z_{\geq4}\to\Z$ defined by
$g(n)=v(H+n;m_\ell)$ (the value of the game $H+n$ under the assumption
that the first player opens the $m$-loop). It follows
easily from Lemma~\ref{easyvals}(a) (applied with $G$ equal to $H$ minus
$m_\ell$) that $|g(n+1)-g(n)|=1$, and now an easy induction on $n$
shows that $g(n)\leq g(4)+n-4$ for $n\geq4$.

We have seen in Proposition~\ref{one4chain} that if $G=H+4$ with
$H$ comprising at least one loop, then an optimal move in~$G$ is
opening the shortest loop in~$H$. We now claim that the same
remains true in the game $H+n$ with $n\geq4$. For if opening the
smallest loop in~$H$ were not optimal, then opening the $n$-chain
must be optimal, and (with $g$ defined as above) we have
\begin{align*}
g(n)&=v(H+n;m_\ell)\\
&>v(H+n)=v(H+n;n)\\
&=n-2+|2-v(H)|=(n-4)+2+|2-v(H)|=n-4+v(H+4;4)\\
&\geq n-4+v(H+4;m_\ell)\mbox{ (by Proposition~\ref{one4chain})}\\
&=n-4+g(4)\geq g(n),
\end{align*}
a contradiction. Hence we deduce that if $H$ is non-empty and consists
entirely of isolated loops of even length, then an optimal
move in $H+n$ ($n\geq4$) is opening
the shortest loop.

The theorem now follows easily from our ``amalgamating chains'' technique.
Indeed Lemma~\ref{amalgoptimal} shows that if opening the shortest
loop is optimal in the game $H+n$ for any $n\geq4$,
then it is optimal in the game $H+C$
where $C$ is any non-empty collection of chains each of which
has length~4 or more.

\end{proof}

Theorem~\ref{no3chains} tells us in which order to open the components of
any simple loony endgame $G$ with no 3-chains: first all the loops are
opened, and then all the chains. But there still remains the issue of
when to lose control. We could use Theorem~\ref{no3chains}
to recursively compute the value of any simple loony
endgame with no 3-chains, by applying it to all the subgames that arise
and computing values of all of them in turn, and this would then give
us an algorithm
for playing such games optimally both as the controller and the
defender. However we want to avoid any recursion at all when running
our algorithms, which
fortunately we can do. Before we state our algorithm for computing
the value of such a simple loony endgame, here is an easy lemma which
we shall use in the proof and several more times later on.

\begin{lemma}\label{quadsandmod8}
If $w\in\Z_{\geq0}$, if $f\in\Z_{\geq0}$, if $w-4f\leq4$,
and if~$v$ is the result of iterating the function $x\mapsto|4-x|$, $f$ times,
on input $w$, then we have the following formula for~$v$: let $0\leq d\leq7$
be congruent to~$w$ mod 8; then $v=|4-d|$ if $f$ is odd,
and $v=4-|4-d|$ if $f$ is even.
\end{lemma}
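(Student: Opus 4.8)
The plan is to induct on the iteration count $f$, exploiting two elementary facts about the map $\phi\colon x\mapsto|4-x|$. First, for $x\geq 8$ we have $\phi(x)=x-4\geq 4$ and hence $\phi^2(x)=x-8$; so applying $\phi$ twice to a large input simply subtracts $8$, which is exactly the period appearing in the statement and which preserves both the residue $d=w\bmod 8$ and the parity of the remaining iteration count. Second, $\phi$ maps the set $\{0,1,2,3,4\}$ to itself as the involution $x\mapsto 4-x$ (with $2$ fixed), so once an orbit enters this set it merely oscillates with period $2$; concretely, for $w\leq 7$ and odd $f$ one gets $\phi^f(w)=\phi(w)=|4-w|$, while for even $f\geq 2$ one gets $\phi^f(w)=\phi^2(w)=4-|4-w|$.

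First I would dispose of the base cases $f\in\{0,1\}$. For $f=0$ the hypothesis $w-4f\leq 4$ forces $w\leq 4$, so $d=w$ and the claimed even-$f$ value $4-|4-d|=4-(4-w)=w$ is correct. For $f=1$ the hypothesis gives $w\leq 8$, and $v=|4-w|$; this agrees with the claimed odd-$f$ value $|4-d|$ throughout $0\leq w\leq 7$ (where $d=w$) and also at the single boundary point $w=8$ (where $d=0$ and $|4-8|=4=|4-0|$).

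For the inductive step, with $f\geq 2$, I would split on the size of $w$. If $w\leq 7$ then $d=w$, and a direct computation finishes: since $\phi(w)\in\{0,1,2,3,4\}$, the orbit is thereafter governed by the involution above, so $\phi^f(w)=|4-w|$ when $f$ is odd and $\phi^f(w)=4-|4-w|$ when $f$ is even, matching the claim. If instead $w\geq 8$, then $\phi^2(w)=w-8$, so $v=\phi^{f-2}(w-8)$; the pair $(w-8,f-2)$ again satisfies all the hypotheses (in particular $(w-8)-4(f-2)=w-4f\leq 4$), has the same residue $d$ modulo $8$ and the same parity of iteration count, so the inductive hypothesis applied to it yields precisely the claimed formula.

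The routine content is entirely contained in the two observations about $\phi$; the only place requiring genuine care is the bookkeeping of parity and of the constraint $w-4f\leq 4$ across the reduction $w\mapsto w-8$, $f\mapsto f-2$, together with the handful of boundary values (notably $w\leq 4$ when $f=0$ and $w=8$ when $f=1$) that must be checked by hand rather than absorbed into the involution argument. I do not expect any substantial obstacle beyond this finite case-checking.
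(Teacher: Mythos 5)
Your proof is correct and follows essentially the same route as the paper, whose entire proof is the single line ``Induction on~$f$''; you have simply carried out that induction in full, with the reduction $(w,f)\mapsto(w-8,f-2)$ for $w\geq8$, the involution $x\mapsto 4-x$ on $\{0,\dots,4\}$ for small $w$, and the boundary checks (including $w=8$, $f=1$) all handled correctly.
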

\begin{proof} Induction on~$f$.
\end{proof}

We are now ready to compute the value of a simple loony endgame
with no 3-chains.

\begin{corollary}\label{vno3chains}
Say $G$ is a simple loony dots and boxes endgame, with no 3-chains.
Write $f$ for the number of 4-loops in $G$,
and set $c=cv(G)$.

(a) If $c\geq2$ or $G$ is empty then $v(G)=c$.

Assume from now on that $G$ is non-empty.

(b) If $c\leq1$ and $f=0$ and $c$ is odd then $v(G)=3$.

(c) If $c\leq1$ and $f=0$ and $c$ is even then $v(G)\in\{2,4\}$ and
$v(G)\equiv c$~mod~4 (this congruence determines $v(G)$ uniquely).

In the remaining case, $c\leq1$ and $f\geq1$, so there is a 4-loop.
Let $K$ denote~$G$ minus all the 4-loops; then $v(K)$ is computable by
(a)--(c) above.

(d) If $c\leq 1$ and $f\geq1$ and $K$ is as above, then let $0\leq d\leq 7$
be the unique integer in this range congruent to $v(K)$~mod~8. If $f$ is odd
then $v(G)=|4-d|$, and if $f$ is even then $v(G)=4-|4-d|$.
\end{corollary}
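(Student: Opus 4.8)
The plan is to handle the four parts in turn, the first two being short and the last two carrying the real content. Part (a) is immediate: the empty game has $v(G)=cv(G)=0$, and if $c=cv(G)\geq2$ then $v(G)=cv(G)$ is exactly Lemma~\ref{lemmavcv}(a). Before the rest, I would record a lower bound: if $G$ is non-empty with no $3$-chains and no $4$-loops, then $v(G)\geq2$. Indeed, if $G$ has a loop then Theorem~\ref{no3chains} says opening the smallest loop $m_\ell$ (with $m\geq6$, as there are no $4$-loops) is optimal, so $v(G)=m-4+|v(H)-4|\geq m-4\geq2$ where $G=H+m_\ell$; and if $G$ has no loop it is all chains, so $v(G)=cv(G)\geq4$ by Theorem~\ref{allchains}. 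I would also note that in cases (b)--(d) the game must contain a loop, since an all-chain simple loony endgame has $cv=4+\sum_i(c_i-4)\geq4$, contradicting $c\leq1$. Part (b) now falls out: $cv(G)<2$ gives $0\leq v(G)\leq4$ and $v(G)\equiv c$~mod~$2$ by Lemma~\ref{lemmavcv}(b), so $c$ odd forces $v(G)\in\{1,3\}$, and $v(G)\geq2$ forces $v(G)=3$.

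For part (c) I would induct on the number of components. The first key observation is that, since $G$ has a loop and $cv(G)\leq1$, its smallest loop must be a $6$-loop: otherwise every loop has length $\geq8$ and every chain length $\geq4$, forcing $fcv(G)\geq0$ and hence $cv(G)\geq tb(G)\geq4$. (This is exactly what prevents the value from blowing up.) Writing $G=H+6_\ell$ and opening this loop, which is optimal by Theorem~\ref{no3chains}, we get $v(G)=2+|v(H)-4|$, while $cv(H)=c+2$ (the terminal bonus is unchanged, as $G$ has no $3$-chains and $H$ is non-empty). Since $cv(H)\equiv c$ is even, either $cv(H)\geq2$ and $v(H)=cv(H)$ by Lemma~\ref{lemmavcv}(a), or $cv(H)\leq1$ and the inductive hypothesis gives $v(H)\in\{2,4\}$ with $v(H)\equiv cv(H)$~mod~$4$; in both cases $v(H)\in\{2,4\}$ and $v(H)\equiv c+2$~mod~$4$. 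Hence $v(G)=2+(4-v(H))=6-v(H)\in\{2,4\}$ and $v(G)\equiv 6-(c+2)=4-c\equiv c$~mod~$4$, using that $c$ is even. The recursion terminates only at subgames with $cv\geq2$, handled by the Lemma~\ref{lemmavcv}(a) branch, so no separate base case is needed.

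For part (d) I would iterate the removal of $4$-loops. A $4$-loop is a smallest loop, so by Theorem~\ref{no3chains} opening it is optimal and $v(G)=|4-v(G\setminus 4_\ell)|$; as removing $4$-loops never creates a $3$-chain and always leaves a loop until they are exhausted, this recursion applies $f$ times and yields $v(G)=T^f(v(K))$, where $T(x)=|4-x|$ and $K$ is $G$ with all $4$-loops deleted (so $v(K)$ is computed by (a)--(c)). To conclude via Lemma~\ref{quadsandmod8} with $w=v(K)$ I must verify its hypothesis $v(K)-4f\leq4$. When $K$ is non-empty the terminal bonus is unchanged, giving $cv(K)=c+4f$; then either $cv(K)\geq2$ and $v(K)=cv(K)=c+4f\leq 4f+4$ since $c\leq1$, or $cv(K)\leq1$ and $v(K)\leq4\leq4f+4$ by Lemma~\ref{lemmavcv}(b); and when $K$ is empty $v(K)=0$. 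In every case $v(K)-4f\leq4$, so Lemma~\ref{quadsandmod8} gives $v(G)=|4-d|$ for $f$ odd and $v(G)=4-|4-d|$ for $f$ even, where $0\leq d\leq7$ is $v(K)$~mod~$8$, which is precisely (d).

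I expect the main obstacle to be the mod~$4$ congruence in part (c): closing the induction forces one to track $v(H)$ modulo~$4$ rather than merely modulo~$2$, and it is only the evenness of $c$ (together with the fact that the peeled loop has length exactly~$6$) that makes $v(G)=6-v(H)\equiv c$~mod~$4$ come out right. A secondary fiddly point, appearing in both (c) and (d), is checking that the terminal bonus does not jump when a component is removed; this is exactly where the hypothesis ``no $3$-chains'' is used, as it excludes the $tb=6$ regime and its boundary transitions.
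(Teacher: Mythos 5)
Your proof is correct and takes essentially the same route as the paper's: part (a) via Lemma~\ref{lemmavcv}(a), parts (b)--(c) by peeling off 6-loops (the paper runs the same recursion, organised as an induction on the number of 6-loops and treating (b) and (c) simultaneously, using exactly your two key facts $cv(H)=cv(G)+2\leq3$ and $v(H)\geq2$ from the absence of 3-chains and 4-loops), and part (d) by opening all 4-loops via Theorem~\ref{no3chains} and invoking Lemma~\ref{quadsandmod8}. Your only departures are refinements of detail rather than of method: you dispose of (b) directly from the bound $v(G)\geq2$ instead of inside the induction, and you explicitly verify the hypothesis $v(K)-4f\leq4$ of Lemma~\ref{quadsandmod8} (and the invariance of the terminal bonus under removing components), points the paper's terse proof leaves to the reader.
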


\begin{proof}
(a) This is just Lemma~\ref{lemmavcv}(a).

(b) and (c): these are proved simultaneously,
by induction on the number of 6-loops in $G$,
using the observations that if $G=H+6_\ell$
then $cv(H)=cv(G)+2\leq3$ and hence $v(H)\leq4$ by Lemma~\ref{lemmavcv},
and that $H$ has no 3-chains or 4-loops so $v(H)\geq2$.

(d) By Theorem~\ref{no3chains} we know that opening all the 4-loops
is an optimal line of play; $v(G)$ is hence computed from $v(K)$
by iterating the function $x\mapsto|x-4|$ $f$ times, so the result
follows from Lemma~\ref{quadsandmod8}.
\end{proof}

\section{Loops, one 3-chain, and no other chains.}

Based on the previous section one might hope that opening the smallest loop is
optimal in all simple loony endgames. Unfortunately, 3-chains
complicate matters immensely. For example, in the
simple loony endgame $G=3+6_\ell+6_\ell+6_\ell$ with three 6-loops and a 3-chain,
opening the smallest loop (one of the 6-loops) is not optimal: it
has a value of $v(G;6_\ell)=3>1=v(G;3)$. 
%
The remainder of this paper is devoted to a proof of the correctness
of our algorithms in the cases where
our simple loony endgame has at least one 3-chain. We break the
argument into three cases. In this section we will fully analyse simple
loony endgames that contain exactly one 3-chain and no other chains at all.
In the next
section we deal with simple loony endgames that contain exactly one 3-chain and
also at least one chain of length 4 or more. Finally
Section~\ref{multiple3chains} handles the situation where there is
more than one 3-chain.

For the remainder of this section then, $G$ denotes a simple loony
endgame with one 3-chain and no other chains. If $G$ has no loops
at all then the game has value~3 and the only move is to open
the 3-chain, so let us for
the rest of this section assume that $G$ also contains at least one
loop. In general, sometimes
opening the smallest loop in $G$ is strictly better than opening the 3-chain,
sometimes it is strictly worse, and in many cases both moves are optimal.
We do not compute all of the optimal moves in any given position, we
are content with just finding one. We write $G=3+H$, where $H$ is
non-empty and composed entirely of loops. Note that Theorem~\ref{allloops}
tells us the value of~$H$ and all of its subgames. The next theorem
tells us an optimal move to play in~$G$, and although we may need
to apply Theorem~\ref{allloops} to compute the value of some subgames
of~$G$, it is clear from the statement of the theorem that we only
need to apply it twice.

\begin{theorem}\label{loopsplus3}
Say $G=H+3=K+L+3$ is a simple loony endgame,
where $H$ is non-empty and composed entirely
of disjoint loops, the smallest of which is~$L$.
The following strategy for playing~$G$ is optimal.
If $v(H)=2$ then open the 3-chain.
If $v(H)\not=2$, if $L$ is a 6-loop and and $v(K) = 2$, then open the 3-chain.
In all other cases, open $L$.
\end{theorem}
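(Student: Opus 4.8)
The plan is to reduce the whole statement to the comparison of two numbers. By Corollary~\ref{smallestchainorloop}(a) an optimal move in $G$ is to open either the unique $3$-chain or the smallest loop $L$, so $v(G)=\min\{v(G;3),v(G;L)\}$ and it suffices, in each branch of the statement, to decide which of $v(G;3)$ and $v(G;L)$ is the smaller (the prescribed move being allowed whenever it attains the minimum). The control formulas recalled in Section~\ref{stayincontrol} give
$$v(G;3)=1+|v(H)-2|,\qquad v(G;L)=\ell-4+|v(K+3)-4|,$$
where $\ell$ denotes the length of $L$. Since every component of $G$ has even length except the $3$-chain, the total number of boxes is odd, so $v(G;3)$ and $v(G;L)$ are both odd, and in particular at least $1$ (values of loony endgames being non-negative). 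This already settles the branch $v(H)=2$: there $v(G;3)=1\le v(G;L)$, so opening the $3$-chain is optimal.

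Assume henceforth $v(H)\ne2$. Because $H$ has no $3$-chains, Theorem~\ref{no3chains} says opening $L$ is optimal in $H$, whence $v(H)=\ell-4+|v(K)-4|$; this expresses $v(H)$ through $b:=v(K)$, which I can compute from Theorem~\ref{allloops} since $K$ is a union of loops. The only remaining input is $v(K+3)$, and here I would induct on the number of loops in $H$: the game $K+3$ is again a union of loops together with a single $3$-chain, but with one fewer loop, so the statement being proved, applied to $K+3$, together with the same control formulas, computes $v(K+3)$ exactly. With $v(H)$, $v(K+3)$, and hence $v(G;3)$ and $v(G;L)$ all expressed through $\ell$ and $b$, the proof finishes by a case analysis on $\ell\in\{4,6,\ge 8\}$ and on $b$, checking in each case that $v(G;L)\le v(G;3)$ (so opening $L$ is optimal) except precisely when $\ell=6$ and $b=2$, where instead $v(G;3)<v(G;L)$ and opening the $3$-chain is forced.

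I expect the main obstacle to be controlling $v(K+3)$ in the regime $cv(K+3)<2$, where Lemma~\ref{lemmavcv}(c) pins it down only to $\{1,3\}$; the inductive hypothesis is exactly what resolves this ambiguity, the key mechanism being that $v(K)=2$ forces $v(K+3)=1$ (via the already-settled branch $v(H)=2$ applied to $K+3$), which in turn inflates $v(G;L)$ above $v(G;3)$. What makes the statement clean, and explains why the exceptional ``open the $3$-chain'' clause mentions only $6$-loops, are three structural facts to be verified along the way: if the smallest loop $L$ has length $\ge8$ then all loops do, so $v(K)=cv(K)\ge8\ne2$ by Theorem~\ref{allloops}(a) and Corollary~\ref{corvcv}, and the anomaly cannot arise; if $L$ is a $4$-loop and $v(K)=2$ then $v(H)=2$, so the configuration is already absorbed into the first branch; hence $v(K)=2$ yields a genuinely new ``open the $3$-chain'' verdict only when $L$ is a $6$-loop. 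Confirming that $v(G;L)\le v(G;3)$ in every other sub-case is then routine once the formulas for $v(H)$ and $v(K+3)$ are in hand.
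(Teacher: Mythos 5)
Your reduction is the same as the paper's: both start from Corollary~\ref{smallestchainorloop}, the control formulas $v(G;3)=1+|v(H)-2|$ and $v(G;L)=\ell-4+|v(K+3)-4|$, and the parity observation, and both dispose of the $v(H)=2$ branch identically. Where you genuinely diverge is in how the unknown quantity $v(K+3)$ is handled. The paper never computes it: it splits first on $cv(G)\geq3$ versus $cv(G)\leq1$ and then argues by contradiction, using Lemma~\ref{lemmavcv} and Corollary~\ref{corvcv} to show that $v(G;L)>v(G;3)$ would force either $v(K+3)$ to be so large that $cv(G)\geq5$ (absurd), or $v(K+3)=1$; and since every loop of $K$ has length at least~6 in the relevant case, a value of~1 can only come from opening the 3-chain, forcing $v(K)=2$. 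Note that this last step is the \emph{converse} of the mechanism you state: your direction ($v(K)=2\Rightarrow v(K+3)=1$) is what makes the exceptional branch exceptional, but in the complementary branch ($L$ a 6-loop, $v(K)\neq2$) it is the converse implication you need; it follows by the same one-line argument, but you should state it. Your alternative of inducting on the number of loops to pin down $v(K+3)$ is workable and in effect proves Corollary~\ref{vloopsplus3} simultaneously with the theorem, whereas the paper derives that corollary afterwards from the theorem; the paper's route buys a shorter case analysis, yours buys explicit values throughout.

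One claim in your plan is false as stated: $v(K+3)$ is \emph{not} a function of $\ell$ and $b=v(K)$, so your concluding ``case analysis on $\ell$ and $b$'' cannot literally determine $v(G;L)$. For a counterexample take $\ell=4$ and compare $K=4_\ell$ with $K=6_\ell+6_\ell$: both have $v(K)=4$, yet $v(3+4_\ell)=1$ while $v(3+6_\ell+6_\ell)=3$. The gap is reparable inside your framework. When $L$ is a 6-loop, $v(H)\neq2$ and $v(K)\neq2$, either $K$ is empty (then $v(K+3)=3$ and the comparison is immediate) or $v(K)\geq6$ (values of $K$ are even, opening any loop of $K$ costs at least~2, and $v(K)=4$ would force $v(H)=2$); in the latter case $cv(K)=v(K)$ by Corollary~\ref{corvcv}(a), the terminal-bonus bookkeeping gives $cv(K+3)=v(K)-3\geq2$, and Lemma~\ref{lemmavcv}(a) yields $v(K+3)=v(K)-3$, so no ambiguity remains; the case $\ell\geq8$ is handled the same way. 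In the one genuinely ambiguous corner, $\ell=4$ with $v(K)\in\{0,4\}$ and $cv(K+3)<2$, Lemma~\ref{lemmavcv}(c) gives $v(K+3)\in\{1,3\}$, and \emph{both} values yield $v(G;L)\leq3=v(G;3)$, so the verdict is insensitive to the ambiguity. With these two patches your argument goes through.
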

\begin{proof}
We know by Corollary~\ref{smallestchainorloop} that the optimal move in~$G$
is to open either~$L$ or the 3-chain. Note also that $cv(G)$ must be odd.
We first deal with the case $cv(G)\geq 3$.
Then $v(G)=cv(G)$ by Lemma~\ref{lemmavcv}(a);
however $cv(H)=cv(G)+3$ because $tb(H)=8$ whereas $tb(G)=6$;
so $v(H)=cv(H)\geq6$ and $v(G;3)=v(H)-1=v(G)+2$. This implies
that opening~$L$ is strictly better than opening the 3-chain, and our
task is to check that this is what the theorem predicts.
We have seen $v(H)>2$. Moreover, if $L$
were a 6-loop, then either $H=L$ so $v(H\backslash L)=0$, or
$H\backslash L$ is non-empty, so $tb(H\backslash L)=tb(H)=8$
and hence $cv(H\backslash L)=cv(H)+2\geq8$, implying $v(H\backslash L)\geq8$. 
Our theorem is hence correct in the case $cv(G)\geq3$.

Let us now assume $cv(G)\leq1$. We know $v(G)$ is odd and hence
$v(G)\geq1$. If $v(H)=2$ then $v(G;3)=1+|v(H)-2|=1\leq v(G)$,
and hence $v(G)=v(G;3)$ and opening the 3-chain is optimal, as predicted.

The next case we consider is when $cv(G)\leq1$, $v(H)\not=2$ and $L$
has length not equal to~6. First note that the length of~$L$ cannot
be~8 or more, because this would imply that all loops in~$H$ had
length 8 or more, hence $fcv(G)\geq-1$ and so $cv(G)\geq5$, a contradiction.
The only possibility is that~$L$ is a 4-loop; we then claim that
opening~$L$ is optimal. For if it were not then $v(G;3)<v(G;L)$, but
$v(G;3)=1+|v(H)-2|\geq 3$ and hence $v(G;L)=v(K+3+L;L)\geq5$,
implying $|v(K+3)-4|\geq5$, so $v(K+3)\geq9$, so $cv(K+3)\geq9$
by Corollary~\ref{corvcv}(a) and hence $cv(G)\geq5$, a contradiction.

The next case we have to check is when $cv(G)\leq 1$, $v(H)\not=2$,
$L=6_\ell$ is a 6-loop, and $v(K)=2$. Then $v(H)=v(K+L;L)=2+|4-v(K)|=4$, and
thus $v(G;3)=3$; however $v(G;L)=2+|4-v(K+3)|\geq3$ (as it is odd)
and thus $v(G;3)\leq v(G;L)$ and opening the 3-chain is optimal.

It suffices then to show that if $cv(G)\leq 1$, $v(H)\not=2$,
$L$ is a 6-loop and $v(K)\not=2$, then opening $L$ is optimal.
We do this by contradiction. If opening $L$ were not optimal
then $v(G;3)=v(G)<v(G;L)$. Now $cv(G)\leq1$ and $v(G)$ is odd,
so by Lemma~\ref{lemmavcv}(b) we must have $v(G)\in\{1,3\}$.
However $v(G;3)=1+|2-v(H)|>1$. Hence $v(G)=v(G;3)=3$, so $v(G;L)\geq5$.
Thus $5\leq2+|v(K+3)-4|$, so either $v(K+3)=1$ or $v(K+3)\geq7$.
If $v(K+3)\geq7$ then $cv(G)\geq 5$, which is a contradiction.
The case $v(K+3)=1$ cannot occur either; $K$ cannot be empty, the
smallest loop in $K$ has length at least~6 and hence opening it
costs at least~2, so the only way that $v(K+3)$ can be~1 is if
$v(K+3;3)=1$, but this implies $v(K)=2$.
\end{proof}
The previous theorem means that we now have an efficient algorithm for
computing an optimal move for any simple loony endgame with one 3-chain
and no other chains; we may use this result to compute the value of any such
game and hence verify the correctness of our second algorithm in the
case where $G$ has only one 3-chain and no other chains.
\begin{corollary}\label{vloopsplus3}
Say $G=3+H$ is a simple loony endgame, where $H$ is non-empty and composed
entirely of loops. Write $f$ for the number of 4-loops in $H$,
and set $c=cv(G)$.

(a) If $c\geq2$ then $v(G)=c$.

(b) If $c\leq1$ and $v(H)=2$ then $v(G)=1$ (note that $v(H)$ can be computed
using Theorem~\ref{allloops}).

(c) If $c\leq1$ and $v(H)\neq2$ and $f=0$ then $v(G)=3$.

(d) If $c\leq 1$ and $v(H)\neq2$ and $f\geq1$ then let $M$ denote $G$
minus all the 4-loops (and note that $v(M)$ can be computed using (a)--(c)
above). Let $0\leq d\leq 7$ be the unique integer in this range congruent
to $v(M)$ modulo~8. If $f$ is odd then $v(G)=|4-d|$, and if
$f$ is even then $v(G)=4-|4-d|$.
\end{corollary}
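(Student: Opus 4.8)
The plan is to handle the four parts in order of increasing difficulty, using throughout the two ``lose-control'' identities from Section~\ref{stayincontrol}: writing $G=3+H=K+L+3$ with $L$ the smallest loop, we have $v(G;3)=1+|v(H)-2|$ and $v(G;L)=\ell-4+|v(K+3)-4|$ where $\ell$ is the length of $L$. I will also use repeatedly that $G$ has an odd total number of boxes ($3$ plus an even number), so $v(G)$ and every $v(G;m)$ are odd, and that $v(G)=\min\{v(G;3),v(G;L)\}$ by Corollary~\ref{smallestchainorloop}. Part (a) is immediate from Lemma~\ref{lemmavcv}(a). For part (b), since $v(H)=2$ we get $v(G;3)=1+|2-2|=1$, whence $v(G)\le v(G;3)=1$; as $v(G)\ge0$ is odd, $v(G)=1$.

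For part (c), where $f=0$ and $v(H)\neq2$: since $v(H)$ is even and avoids $2$, a one-line check gives $v(G;3)=1+|v(H)-2|\ge3$, while Lemma~\ref{lemmavcv}(c) gives $v(G;3)\le3$, so $v(G;3)=3$. Because $f=0$ the smallest loop has length $\ell\ge6$, so $v(G;L)=\ell-4+|v(K+3)-4|\ge2$; being odd, it is in fact $\ge3$. Hence $v(G)=\min\{v(G;3),v(G;L)\}=3$.

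Part (d) is the crux. Set $G_k=M+k\cdot4_\ell$ for $0\le k\le f$, so that $G_0=M$, $G_f=G$, and passing from $G_{k-1}$ to $G_k$ adjoins one $4$-loop with $v(G_k;4_\ell)=|4-v(G_{k-1})|$. I want to show opening a $4$-loop is optimal in each $G_k$ with $k\ge1$, so that $v(G_k)=|4-v(G_{k-1})|$ and hence $v(G)$ is the $f$-fold iterate of $x\mapsto|4-x|$ applied to $v(M)$. By Theorem~\ref{loopsplus3} applied to $G_k=3+H_k$ (where $H_k$ denotes the loop part of $G_k$), this optimality holds precisely when $v(H_k)\neq2$, the $6$-loop clause being vacuous because for $k\ge1$ the smallest loop of $H_k$ is a $4$-loop. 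The key observation is that each $H_k$ is a pure loop game, so opening its smallest loop is optimal (Corollary~\ref{smallestchainorloop}), giving $v(H_k)=|4-v(H_{k-1})|$; thus $v(H_0),\dots,v(H_f)$ is an orbit of $x\mapsto|4-x|$. Since $2$ is a fixed point of this map, if $v(H_k)=2$ for some $k\le f$ then $v(H_f)=v(H)=2$, contradicting the hypothesis $v(H)\neq2$. Hence $v(H_k)\neq2$ for every $k$, and the iteration is legitimate.

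It then remains to invoke Lemma~\ref{quadsandmod8} with $w=v(M)$, which requires $v(M)-4f\le4$; I verify this by a short case analysis using Lemma~\ref{lemmavcv}: if $cv(M)\ge2$ then $v(M)=cv(M)=c+4f$, so $v(M)-4f=c\le1$, while otherwise $v(M)\le4$ forces $v(M)-4f\le4-4f\le0$ (and the degenerate case $M=3$ gives $v(M)=3$ directly). Lemma~\ref{quadsandmod8} then produces exactly the stated mod-$8$ formula. The main obstacle is this part (d): specifically, guaranteeing that opening a $4$-loop remains optimal at \emph{every} intermediate stage, which is handled by the fixed-point argument above, together with the side check that the hypothesis of Lemma~\ref{quadsandmod8} is met; the other three parts are routine once the two control identities and the parity constraint are in hand.
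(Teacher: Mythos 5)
Your proof is correct and follows essentially the same route as the paper's: parts (a)--(c) via Lemma~\ref{lemmavcv} together with the parity and lose-control identities, and part (d) by invoking Theorem~\ref{loopsplus3} at every intermediate stage $M+k\cdot 4_\ell$, with exactly the paper's fixed-point observation that $v(H_k)=2$ would propagate under $x\mapsto|4-x|$ to force $v(H)=2$, before finishing with Lemma~\ref{quadsandmod8}. The only cosmetic difference is that you check the hypothesis $v(M)-4f\leq4$ of Lemma~\ref{quadsandmod8} by a direct case analysis on $cv(M)$ (including the degenerate case $M=3$), whereas the paper gets the same conclusion from the bound $v(G)\leq3$ of Lemma~\ref{lemmavcv}(c).
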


\begin{proof}
(a) This is just Lemma~\ref{lemmavcv}(a).

(b) If $v(H)=2$ then $v(G;3)=1$ and hence $v(G)=1$ because $v(G)$ is
non-negative, odd, and at most~1.

(c) The assumptions imply $v(G;3)\geq3$; there are no 4-loops, so if~$L$
is the smallest loop then $v(G;L)\geq2$, and $v(G;L)$ is odd so $v(G;L)\geq3$.
Hence $v(G)\geq3$. But $v(G)\leq3$ by Lemma~\ref{lemmavcv}(c).

(d) Write $M=N+3$, where $N$, possibly empty, is composed entirely
of loops of length~6 or~more. If $d\in\Z_{\geq0}$ we write $N+d\cdot 4_\ell$
for the game comprising the position~$N$ plus $n$ 4-loops, so $H=N+f\cdot 4_\ell$.
Now $H$ has no chains, so an optimal way to play~$H$ is to open all the loops
in order, starting
with the smallest; hence if $v(N+d\cdot 4_\ell)=2$ for some $1\leq d\leq f$,
then $v(H)=2$ contradicting our assumptions. By Theorem~\ref{loopsplus3}
we see that opening the 4-loop is optimal in $N+d\cdot 4_\ell+3=M+d\cdot 4_\ell$ for all
$1\leq d\leq f$. In particular we can compute $v(G)$ by starting with
$v(M)$ and then iterating the function $x\mapsto|4-x|$, $f$ times.
We know $v(G)\leq3$ by Lemma~\ref{lemmavcv}(c), and the result follows
by Lemma~\ref{quadsandmod8}.
\end{proof}

\section{Loops, very long chains, and one 3-chain}

In the previous section we analysed games consisting of some loops and a single 3-chain. In this section we will add long chains of length at least 4 to that situation. We begin by proving that our algorithm for predicting an optimal
move is correct in this situation.

\begin{theorem}\label{one3chainonebigchain}
If $G$ is a simple loony endgame that contains a single 3-chain and at least one chain of length $n \geq 4$, the following strategy is optimal:
If $cv(G) \leq 1$ and $G$ contains a 4-loop, write $G=H+3+4_\ell$ and open the 4-loop if $cv(H+3)=4$ or if $v(H+4_\ell) \in \left\{0,4\right\}$. In all other cases, open the 3-chain.
\end{theorem}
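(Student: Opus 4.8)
The plan is to invoke Corollary~\ref{smallestchainorloop}, which guarantees that an optimal move is to open either the smallest chain or the smallest loop; since $G$ has a unique $3$-chain and every other chain has length $\geq4$, the smallest chain is the $3$-chain, so it suffices to compare ``open the $3$-chain'' against ``open the smallest loop''. I would first clear away the two branches where the rule defaults to the $3$-chain. If $G$ has no loops, it is a pure chain endgame and Theorem~\ref{allchains} handles it, with the smallest chain optimal. If $cv(G)\geq2$, I would use the move-value formula $v(G;3)=1+|v(G\backslash3)-2|$ together with the observation that removing the $3$-chain leaves the terminal bonus at $4$, so $cv(G\backslash3)=cv(G)+1\geq3$; Lemma~\ref{lemmavcv}(a) then gives $v(G\backslash3)=cv(G)+1$, whence $v(G;3)=cv(G)=v(G)$ and the $3$-chain is optimal.

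Next I would treat $cv(G)\leq1$ with no $4$-loop present (still a default-to-$3$-chain branch). Here the smallest loop $L$ has length $\geq6$, so $v(G;L)=(\operatorname{len}(L)-4)+|v(G\backslash L)-4|\geq2$, while Lemma~\ref{lemmavcv}(c) forces $v(G;3)\in\{1,2,3\}$. Since all move-values $v(G;m)$ share the parity of the total number of boxes of $G$, one gets $v(G;3)\leq v(G;L)$ directly: if $v(G;3)\leq2$ this is immediate, and if $v(G;3)=3$ then $v(G;L)$ is odd and $\geq2$, hence $\geq3$. So opening the $3$-chain is optimal, as the rule predicts.

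The substantive case is $cv(G)\leq1$ with a $4$-loop. Writing $G=H+3+4_\ell$, I set $a=v(H+3)=v(G\backslash4_\ell)$ and $b=v(H+4_\ell)=v(G\backslash3)$, so that $v(G;4_\ell)=|a-4|$ and $v(G;3)=1+|b-2|$, and $v(G)=\min\{v(G;3),v(G;4_\ell)\}$ by Corollary~\ref{smallestchainorloop}. The quantity $b$ is computable from Corollary~\ref{vno3chains} since $H+4_\ell$ has no $3$-chain. The structural inputs I would extract are: $0\leq b\leq4$ (from $v(G;3)\leq3$); $cv(H+3)=cv(G)+4\leq5$, so by Lemma~\ref{lemmavcv} either $cv(H+3)\geq2$ and $a=cv(H+3)$, or $a\leq3$; and, because $H+3$ and $H+4_\ell$ differ by one box overall, $a$ and $b$ have opposite parity, with $a\equiv v(G)\pmod 2$.

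Finally I would split on the parity of $cv(G)$, and this bookkeeping is where the main obstacle lies. When $cv(G)$ is even, $b$ is odd in $\{1,3\}$, so $v(G;3)=2$ identically; $a$ is even, and the only way to reach $a=4$ (i.e.\ $v(G;4_\ell)=0$) is $cv(H+3)=4$, while for $a\in\{0,2\}$ one has $v(G;4_\ell)\geq2=v(G;3)$ --- so the $4$-loop is optimal exactly when $cv(H+3)=4$, matching the rule (the disjunct $b\in\{0,4\}$ being vacuous in this parity). When $cv(G)$ is odd, $cv(H+3)$ is odd so $cv(H+3)=4$ cannot occur, $a$ is odd (hence $1\leq a\leq5$, giving $v(G;4_\ell)=|a-4|\in\{1,3\}$), and $b$ is even in $\{0,2,4\}$, yielding $v(G;3)=3$ when $b\in\{0,4\}$ and $v(G;3)=1$ when $b=2$; thus opening the loop is optimal precisely when $b\in\{0,4\}$, again matching the rule. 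The delicate point throughout is tracking how the terminal bonus behaves as one passes between $G$, $H+3$, and $H+4_\ell$, and recognising that the parity split is exactly what makes the two disjuncts of the stated condition govern disjoint cases; once the ranges and parities of $a$ and $b$ are pinned down by Lemma~\ref{lemmavcv}, the remaining verification is a short finite check.
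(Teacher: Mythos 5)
Your proposal is correct and takes essentially the same approach as the paper: the same reduction via Corollary~\ref{smallestchainorloop} to comparing $v(G;3)=1+|v(H+4_\ell)-2|$ against $v(G;4_\ell)=|v(H+3)-4|$, the same case decomposition ($cv(G)\geq2$; $cv(G)\leq1$ with no 4-loop; $cv(G)\leq1$ with a 4-loop), and the same inputs from Lemma~\ref{lemmavcv}(a),(c), terminal-bonus bookkeeping, and the mod-2 congruence of move values. Your explicit parity split on $cv(G)$ in the final case is only a repackaging of the paper's argument (the paper treats $cv(H+3)=4$ first and then splits on $v(H+4_\ell)\in\{0,4\}$, bounding $v(G;4_\ell)\leq4$ via Corollary~\ref{corvcv}(a) where you instead use the Lemma~\ref{lemmavcv}(a)/(c) dichotomy to get $v(H+3)\leq5$), so the two proofs coincide in substance.
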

\begin{remark} Note that $v(H+4_\ell)$ can be computed using Corollary~\ref{vno3chains}.
\end{remark}
\begin{proof}
As usual, the proof breaks into a number of cases; in each case we verify
that the theorem gives us an optimal move in each case.

The first case we consider is the case $cv(G)\geq2$. Then
$cv(G\backslash 3)=cv(G)+1\geq3$, so by Lemma~\ref{lemmavcv}(a)
we have $v(G\backslash 3)=v(G)+1\geq3$, hence $v(G;3)=v(G\backslash 3)-1=v(G)$
and so opening the 3-chain is optimal.

The second case is when $cv(G)\leq 1$ but $G$ has no 4-loop.
By Lemma~\ref{lemmavcv}(c) we have $v(G;3)\leq 3$, but if~$L$ is
any loop in~$G$ then $L$ must have length at least~6, hence $v(G;L)\geq2$.
Because $v(G;3)$ and $v(G;L)$ must be congruent mod~2, we deduce
that $v(G;3)\leq v(G;L)$ for any loop, and hence again opening
the 3-chain is optimal.

So from now on we may assume $cv(G)\leq1$ and $G$ has a 4-loop;
we write $G=3+4_\ell+H$. Again by Lemma~\ref{lemmavcv}(c)
we have $v(G;3)\leq 3$.

The next case, a very easy case, is when $cv(H+3)=4$. By Lemma~\ref{lemmavcv}
we have $v(H+3)=4$, so $v(G;4_\ell)=0$ and opening the 4-loop must be optimal.

The last case we deal with is when $cv(H+3)\not=4$. This implies
that $v(H+3)\not=4$ (if $cv(H+3)\geq2$ use Lemma~\ref{lemmavcv}(a),
and if $cv(H+3)<2$ use Lemma~\ref{lemmavcv}(c)), and hence $v(G;4_\ell)\not=0$.
Next we claim $v(G;4_\ell)\leq 4$; for if $v(G;4_\ell)\geq5$ then $v(3+H)\geq9$,
so $cv(3+H)\geq9$ (Corollary~\ref{corvcv}), so $cv(G)\geq5$ contradicting
$cv(G)\leq1$. We deduce $1\leq v(G;4_\ell)\leq 4$. Our aim is to work out
which of $v(G;4_\ell)$ and $v(G;3)$ is the smaller, and recall that we know
that these numbers are congruent modulo~2. We have
$v(G;3)=1+|2-v(H+4_\ell)|$ and we already know that this is at most~3.
But we are now finished, because if $v(H+4_\ell)\in\{0,4\}$ then
$v(G;3)=3\geq v(G;4_\ell)$ and we open the 4-loop,
but if $v(H+4_\ell)\not\in\{0,4\}$ then $v(G;3)\in\{1,2\}$ so
$v(G;3)\leq v(G;4_\ell)$ and we open the 3-chain.
\end{proof}

Once again we can use this result to verify that our algorithm predicting
the value of such a game is correct.

\begin{corollary}\label{vone3chainonebigchain}
If $G$ is a simple loony endgame with exactly one 3-chain and at least one chain of length 4 or more, and $c=cv(G)$, we can compute its value $v(G)$ as follows:

(a) If $c\geq2$ then $v(G)=c$.

(b) If $c\leq1$ and $c$ is even, the value of $G$ is 2 except if $G$ has a 4-loop and $cv(G\backslash 4_\ell)=4$, in which case $v(G)=0$.

(c) If $c\leq1$ is odd and $v(G\backslash 3)=2$, then $v(G)=1$.

(d) If $c\leq1$ is odd, $v(G\backslash 3)\neq2$, and $G$ has
no 4-loops, then $v(G)=3$.

(e) If $c\leq1$ is odd and $v(G\backslash 3)\neq2$ and $G$ has $f\geq1$
4-loops, then let $K$ denote $G$ minus all the 4-loops (whose value can be computed using (a), (c) and (d)), and let $0\leq d\leq 7$ be congruent to~$v(K)$
mod~8. If $f$ is odd then $v(G)=|4-d|$, and if $f$ is even then $v(G)=4-|4-d|$.
\end{corollary}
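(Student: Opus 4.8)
\emph{Proof proposal.} The plan is to reduce each part to a short parity-and-bound computation, using Theorem~\ref{one3chainonebigchain} to identify the optimal move and then evaluating that move with the formulas of Section~\ref{stayincontrol}. Throughout, Corollary~\ref{smallestchainorloop}(a) tells us that an optimal move opens either the unique $3$-chain or the smallest loop $L$, so $v(G)=\min\{v(G;3),v(G;L)\}$, where $v(G;3)=1+|v(G\backslash 3)-2|$ and $v(G;L)=|L|-4+|v(G\backslash L)-4|$; recall also that $v(G)\equiv c\pmod 2$. Part (a) is immediate from Lemma~\ref{lemmavcv}(a).

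Part (c) is cleanest: if $v(G\backslash 3)=2$ then $v(G;3)=1$, and since opening the $3$-chain is a legal move we get $v(G)\leq v(G;3)=1$; as $v(G)$ is a nonnegative odd integer, $v(G)=1$. For (b) and (d) I would split on whether $G$ has a $4$-loop, using $cv(G\backslash 3)=c+1$ and $cv(G\backslash 4_\ell)=c+4$ (the terminal bonus stays $4$ because $G$ contains a chain of length $\geq 4$). If $G$ has no $4$-loop, Theorem~\ref{one3chainonebigchain} makes the $3$-chain optimal, so $v(G)=1+|v(G\backslash 3)-2|$; since $G\backslash 3$ has no $3$-chain and $cv(G\backslash 3)<2$, Lemma~\ref{lemmavcv}(b) confines $v(G\backslash 3)$ to $[0,4]$ with the correct parity, giving $v(G)=2$ when $c$ is even (part (b)) and $v(G)=3$ when $c$ is odd with $v(G\backslash 3)\neq 2$ (part (d)). If $G$ has a $4$-loop and $c$ is even (the rest of part (b)), I compare $v(G;3)$ with $v(G;4_\ell)=|v(G\backslash 4_\ell)-4|$: the exceptional answer $v(G)=0$ occurs exactly when $cv(G\backslash 4_\ell)=4$, which forces $c=0$, $v(G\backslash 4_\ell)=4$ by Lemma~\ref{lemmavcv}(a), and $v(G;4_\ell)=0$, with Theorem~\ref{one3chainonebigchain} confirming the $4$-loop is opened; otherwise the bounds of Lemma~\ref{lemmavcv} squeeze the minimum of the two move-values down to $2$.

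Part (e) is the real work. Write $G_j=K+j\cdot 4_\ell$, so $G_0=K$ and $G_f=G$, with $cv(G_j)=c+4(f-j)$. I would show that opening a $4$-loop is an optimal move in every $G_j$ with $1\leq j\leq f$, so that $v(G_j)=v(G_j;4_\ell)=|v(G_{j-1})-4|$ and hence $v(G)$ is the result of iterating $x\mapsto|4-x|$ exactly $f$ times on $v(K)$. When $cv(G_j)\geq 2$ this is automatic: Lemma~\ref{lemmavcv}(a) gives $v(G_j)=cv(G_j)$ and $v(G_{j-1})=cv(G_j)+4$, so $|v(G_{j-1})-4|=cv(G_j)=v(G_j)$ and the $4$-loop move ties the optimum. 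When $cv(G_j)\leq 1$ I would invoke Theorem~\ref{one3chainonebigchain}: its condition $cv(G_j\backslash 4_\ell)=4$ cannot hold because $cv(G_j\backslash 4_\ell)=cv(G_j)+4$ is odd, so the theorem opens the $4$-loop precisely when $v(G_j\backslash 3)\in\{0,4\}$. Finally, applying Lemma~\ref{quadsandmod8} with $w=v(K)$ and the easily checked bound $v(K)-4f\leq 4$ converts the iteration into the stated mod-$8$ formula.

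The main obstacle is therefore verifying $v(G_j\backslash 3)\in\{0,4\}$ for every $j$ with $cv(G_j)\leq 1$, given only the hypothesis $v(G\backslash 3)\neq 2$. Here I would use that $G_j\backslash 3$ has no $3$-chains, so by Theorem~\ref{no3chains} (equivalently Corollary~\ref{vno3chains}) its value too is computed by successively opening $4$-loops, whence $v(G\backslash 3)=v(G_f\backslash 3)$ is the result of iterating $x\mapsto|4-x|$ a total of $f-j$ times on $v(G_j\backslash 3)$. Since this iteration fixes $2$ while mapping $\{0,4\}$ into itself, $v(G_j\backslash 3)=2$ would force $v(G\backslash 3)=2$; contrapositively, the hypothesis $v(G\backslash 3)\neq 2$ propagates down to every $G_j\backslash 3$. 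As $cv(G_j\backslash 3)=cv(G_j)+1\leq 2$ is even and no $3$-chain is present, Lemma~\ref{lemmavcv} pins $v(G_j\backslash 3)$ to $\{0,2,4\}$, and excluding $2$ leaves $v(G_j\backslash 3)\in\{0,4\}$, exactly as the move theorem requires.
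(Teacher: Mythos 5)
Your proposal is correct and follows essentially the same route as the paper's proof: (a)--(d) use the same parity-and-bound arguments from Lemma~\ref{lemmavcv} and Corollary~\ref{corvcv}(b) for the exceptional $v(G)=0$ case, and (e) is the paper's induction verbatim in spirit --- split on $cv(G_j)\geq2$ versus $\leq1$, note the first condition of Theorem~\ref{one3chainonebigchain} fails by parity, propagate $v(G_j\backslash3)\neq2$ down via Theorem~\ref{no3chains} (since $x\mapsto|4-x|$ fixes $2$), and finish with Lemma~\ref{quadsandmod8}. The only micro-detail worth noting is in (d): if $c=1$ then $cv(G\backslash3)=2$, so Lemma~\ref{lemmavcv}(b) does not literally apply there, but Lemma~\ref{lemmavcv}(a) then gives $v(G\backslash3)=2$, making that subcase vacuous.
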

\begin{remark}\label{rkone3chainonebigchain} Note that $v(G\backslash 3)$ can be computed
using Corollary~\ref{vno3chains}.
\end{remark}
\begin{proof}
(a) This is Lemma~\ref{lemmavcv}(a).

(b) We note that Lemma~\ref{lemmavcv}(c) implies that
$v(G)\in\left\{0,2\right\}$. Furthermore $v(G)=0$ iff $G$ has a 4-loop
and $v(G\backslash 4_\ell)=4$. By Corollary~\ref{corvcv}(b) and Lemma~\ref{lemmavcv}(a) this occurs iff $cv(G\backslash 4_\ell)=4$.

(c) This is clear; $v(G\backslash3)=2$ implies $v(G;3)=1$.

(d) $c$ is odd, and hence the value of any loop move is odd. If there
are no 4-loops, then the value of any loop move is at least~3. 
Now $v(G\backslash3)\not=2$ implies $v(G;3)\geq3$, and we conclude $v(G)\geq3$.
We conclude by using Lemma~\ref{lemmavcv}(c).

(e) The result will follow from Lemma~\ref{quadsandmod8}, once we have
proved that opening all the 4-loops in~$G$ is an optimal line of play.
Let us consider the position $K+d.4_\ell$, where $1\leq d\leq f$.
We prove by induction on~$d$ that opening the 4-loop is optimal,
and this suffices. 

There are two cases to consider.
The first is when $cv(K+d\cdot 4_\ell)\geq2$. In this
case, $v(K+d\cdot 4_\ell)=cv(K+d\cdot 4_\ell)$
by Lemma~\ref{lemmavcv}(a). Moreover,
$cv(K+(d-1)\cdot 4_\ell)=4+cv(K+d\cdot 4_\ell)\geq6$, and hence
$v(K+(d-1)\cdot 4_\ell)=cv(K+(d-1)\cdot 4_\ell)\geq6$,
so $v(K+d\cdot 4_\ell;4_\ell)=v(K+(d-1)\cdot 4_\ell)-4=v(K+d\cdot 4_\ell)$ and hence
opening a 4-loop is optimal in this situation. 

The other case is when $cv(K+d\cdot 4_\ell)\leq1$. Write $K=3+M$
and observe that for any $e\geq1$, the game $M+e\cdot 4_\ell$ has no 3-chains,
so by Theorem~\ref{no3chains} an optimal way to play it is to open
the 4-loop. We claim $v(M+d\cdot 4_\ell)\not=2$; for if $v(M+d\cdot 4_\ell)=2$
then $v(M+e\cdot 4_\ell)=2$ for all $e\geq d$ and in particular $v(G\backslash3)=2$,
contradicting our assumptions. We next note that $cv(K+d\cdot 4_\ell)\leq1$
implies $cv(M+d\cdot 4_\ell)\leq 2$ and hence $v(M+d\cdot 4_\ell)\leq4$ by
Lemma~\ref{lemmavcv}(a) and (b). Now $c$ is odd, so $v(M+d\cdot 4_\ell)$
is even, and hence $v(M+d\cdot 4_\ell)\in\{0,4\}$. 
So by Theorem~\ref{one3chainonebigchain} opening a 4-chain
is optimal in $3+M+d\cdot 4_\ell=K+d\cdot 4_\ell$, and we are finished.
\end{proof}

\section{Simple loony endgames with at least two 3-chains}\label{multiple3chains}

The last situation we need to analyse is when $G$ has at least two 3-chains
(and of course possibly other chains and loops).
Theorem~\ref{theoremmultiple3chains} describes an optimal strategy
for such a situation. This result concludes the proof that our
algorithm to compute an optimal move in a simple loony endgame is
correct.

\begin{theorem}\label{theoremmultiple3chains}
If $G$ is a simple loony endgame with at least two 3-chains, the following strategy is optimal: If there is a 4-loop in $G$, write $G=H+3+4_\ell$
and open the 4-loop if $cv(H+3)=4$ or $cv(H+4_\ell)=4$ or $cv(H)=4$.
In all other cases, open the 3-chain.
\end{theorem}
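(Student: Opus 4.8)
The plan is to invoke Corollary~\ref{smallestchainorloop}(a): since $G$ has at least two $3$-chains, its smallest chain is a $3$-chain, so an optimal move is either to open a $3$-chain or to open the smallest loop $L$. The whole problem thus reduces to deciding which of $v(G;3)$ and $v(G;L)$ is smaller, where by the formulas of Section~\ref{stayincontrol} we have $v(G;3)=1+|v(G\backslash 3)-2|$ and, when a $4$-loop is present, $L=4_\ell$ and $v(G;4_\ell)=|v(G\backslash 4_\ell)-4|$. I would set up the bookkeeping once: writing $G=H+3+4_\ell$ gives $G\backslash 3=H+4_\ell$ and $G\backslash 4_\ell=H+3$, and since $G$ has $\geq2$ $3$-chains removing one keeps a chain present, so $cv(G\backslash 3)=cv(G)+1$; a short terminal-bonus computation likewise yields $cv(H+4_\ell)=cv(G)+1$, $cv(H)=cv(H+3)+1$, and shows $cv(H+3)$ equals $cv(G)+4$ in general, dropping to $cv(G)+2$ only in the degenerate case where $H$ is a union of $3$-chains (which forces $cv(G)\leq0$).

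First I would dispose of $cv(G)\geq2$. Here $v(G)=cv(G)$ by Lemma~\ref{lemmavcv}(a), and $cv(G\backslash 3)=cv(G)+1\geq3$ gives $v(G;3)=cv(G)=v(G)$, so opening a $3$-chain is optimal. Running the bookkeeping against the three stated conditions, the only one that can hold when $cv(G)\geq2$ is $cv(H+4_\ell)=4$, which forces $cv(G)=3$; then $cv(H+3)=7$, so $v(H+3)=7$ and $v(G;4_\ell)=3=v(G)$, and opening the $4$-loop is also optimal, matching the theorem. Next, $cv(G)\leq1$ with no $4$-loop: by Lemma~\ref{lemmavcv}(c) we have $v(G;3)\leq3$, every loop has length $\geq6$ so $v(G;L)\geq2$, and since $v(G;3)\equiv v(G;L)$ mod $2$ (both equal the parity of the total number of boxes) a short case check gives $v(G;3)\leq v(G;L)$, so opening a $3$-chain is optimal.

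The heart of the matter is $cv(G)\leq1$ with a $4$-loop present, comparing $v(G;3)=1+|v(H+4_\ell)-2|$ with $v(G;4_\ell)=|v(H+3)-4|$. The two ``open the $4$-loop'' conditions are quick: if $cv(H+3)=4$ then $v(H+3)=4$ and $v(G;4_\ell)=0$, the global minimum; if $cv(H)=4$, i.e. $cv(H+3)=3$, then $v(H+3)=3$ and $v(G;4_\ell)=1\leq v(G;3)$ (as $v(G;3)\geq1$ always); in both cases opening the $4$-loop is optimal, and $cv(H+4_\ell)=4$ cannot occur here since it forces $cv(G)=3$. The real work is showing that when none of the three conditions holds, opening a $3$-chain is optimal, i.e. $v(G;3)\leq v(G;4_\ell)$. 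I would argue by contradiction, assuming $v(G;4_\ell)<v(G;3)\leq3$, so $v(G;4_\ell)\in\{0,1,2\}$: the value $0$ forces $v(H+3)=4$, hence $cv(H+3)=4$ by Corollary~\ref{corvcv}(b), contradicting the failure of the first condition; the value $2$ is excluded by the mod-$2$ congruence of $v(G;3)$ and $v(G;4_\ell)$; and the value $1$ means $v(H+3)\in\{3,5\}$, where $v(H+3)=5$ gives $cv(H+3)=5$ by Corollary~\ref{corvcv}(a) and the bookkeeping then pins $cv(G)=1$, $cv(H+4_\ell)=2$, $v(H+4_\ell)=2$, hence $v(G;3)=1$, a contradiction.

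I expect the remaining branch $v(H+3)=3$ (all conditions failing) to be the main obstacle, and I would close it as follows. Since $v(H+3)\leq v(H+3;3)\leq3$ by Lemma~\ref{lemmavcv}(c), equality forces $v(H+3;3)=1+|v(H)-2|=3$, so $v(H)\in\{0,4\}$; the value $4$ would give $cv(H)=4$ by Corollary~\ref{corvcv}(b), reviving the failed third condition, so $v(H)=0$. But a value-$0$ move in $H$ cannot be a chain move (those cost at least one box), so it must open some $4$-loop $L_0$ with $v(H\backslash L_0)=4$; then $cv(H\backslash L_0)=4$ by Corollary~\ref{corvcv}(b), whence $cv((H\backslash L_0)+3)=3$ and $v((H\backslash L_0)+3)=3$, so the defender can open $L_0$ in $H+3$ to score $v(H+3;L_0)=|3-4|=1$. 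This makes $v(H+3)\leq1$, contradicting $v(H+3)=3$. Hence this branch never arises, every sub-case of the contradiction is closed, and the $3$-chain is optimal exactly when the theorem prescribes it. The delicate point throughout is this final elimination, since it is the only place where the three opaque controlled-value conditions must be shown to capture precisely the positions in which the $4$-loop beats the $3$-chain.
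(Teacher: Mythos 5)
Your proposal is correct, and its skeleton matches the paper's: reduce via Corollary~\ref{smallestchainorloop}(a) to comparing $v(G;3)$ with $v(G;4_\ell)$, dispatch each of the three controlled-value conditions using Lemma~\ref{lemmavcv}(a), and close the residual case by parity together with Corollary~\ref{corvcv}. But the route through the hard part genuinely differs. First, you stratify by $cv(G)$ at the outset: this lets Lemma~\ref{lemmavcv}(c) supply $v(G;3)\leq3$ throughout the remaining analysis, so you never need Corollary~\ref{corvcv}(c), which the paper invokes (in both the no-4-loop case and the final case) precisely because it does not stratify by $cv(G)$. The price is the controlled-value bookkeeping ($cv(H+4_\ell)=cv(G)+1$, $cv(H)=cv(H+3)+1$, $cv(H+3)=cv(G)+4$ outside the all-3-chains degeneracy), which you carry out correctly, including the nice observation that $cv(H+4_\ell)=4$ can coexist with $cv(G)\geq2$ only when $cv(G)=3$, where both moves tie at value~$3$; your handling of $cv(H)=4$ via $cv(H+3)=3$, giving $v(G;4_\ell)=1$ directly, is in fact slicker than the paper's. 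Second, and more substantively: in the crux (all conditions fail, $v(G;4_\ell)=1<v(G;3)=3$) the paper works from $v(G;3)=3$, getting $v(H+4_\ell)\in\{0,4\}$ and killing each branch in one line by Corollary~\ref{corvcv}(b) (the $v(H+4_\ell)=0$ branch descending once to $v(H)=4$); you instead work from $v(G;4_\ell)=1$, getting $v(H+3)\in\{3,5\}$, and your branch $v(H+3)=3$ costs an extra idea the paper never needs at this depth: descending to $v(H)=0$, noting a value-$0$ move must open a 4-loop $L_0$ with $v(H\backslash L_0)=4$, and exhibiting the defender's deviation $v(H+3;L_0)=|3-4|=1$ to contradict $v(H+3)=3$. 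That deviation argument is sound — it is the same ``a value-$0$ position pins a 4-loop'' mechanism the paper uses one level up — just longer. One small patch is required: your appeal to Lemma~\ref{lemmavcv}(c) for $H+3$ needs the hypothesis $cv(H+3)<2$, which you never verify, and $cv(G)\leq1$ alone only yields $cv(H+3)\leq5$. It does hold: $cv(H+3)\leq v(H+3)=3$ and is odd, and $cv(H+3)=3$ would force $cv(H)=4$, contradicting the failed third condition; that one line should be added, but with it every case closes and the theorem is proved.
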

\begin{proof}
As usual, we break things up into cases, and verify that the theorem predicts
an optimal move in each case.

The first case we consider is when~$G$ has no 4-loop, and our task
is hence to prove that opening the 3-chain is optimal. If $v(G;3)\geq4$
then Corollary~\ref{corvcv}(c) does the job. If however $v(G;3)\leq3$
then opening the 3-chain must be optimal, because the value of opening
any loop is at least~2, and is congruent to $v(G;3)$~mod~2,
so must be at least~$v(G;3)$.

So we may now assume that~$G=H+3+4_\ell$ contains a 4-loop, and the
next three cases we consider will be the three cases where the
theorem tells us to open it. The easiest case
is when $cv(H+3)=4$; then $v(H+3)=4$ by Lemma~\ref{lemmavcv}(a)
and hence $v(G;4_\ell)=0$, so opening the 4-loop is optimal.

Before we continue with our case-by-case analysis,
we observe that if $v(G;4_\ell)\geq5$ then
$v(H+3)\geq9$, and by repeated uses of Lemma~\ref{lemmavcv}(a)
and Corollary~\ref{corvcv}(a) we may deduce $cv(H+3)\geq9$,
$cv(H)\geq10$ and $cv(H+4_\ell)\geq6$ (note $tb(H+4_\ell)\geq tb(H)$).
Hence $v(H+3)\geq9$, $v(H)\geq10$ and $v(H+4_\ell)\geq6$.

The next case we consider is when $cv(H+4_\ell)=4$. Then $v(H+4_\ell)=4$
and hence $v(G;3)=3$. Because $v(H+4_\ell)\leq 5$, the argument in the
previous paragraph shows that $v(G;4_\ell)\leq4$; moreover $v(G;4_\ell)$
is odd, and hence at most~3, and so opening the 4-loop is optimal,
as predicted.

The next case we consider is when $cv(H)=4$. The argument in the
last-but-one paragraph then shows that $v(G;4_\ell)\leq 4$
and hence $v(G;4_\ell)\leq3$.
Furthermore $v(H)=4$, so $v(H+4_\ell;4_\ell)=0$, thus $v(H+4_\ell)=0$.
We again deduce $v(G;3)=3$, so again opening the 4-loop in~$G$ is
optimal.

The final case we need to consider is when $G=3+4_\ell+H$,
$cv(H+3)\not=4$, $cv(H+4_\ell)\not=4$ and $cv(H)\not=4$; we
need to check in this case that opening the 3-chain is optimal.
If $v(G;3)\geq4$ then Corollary~\ref{corvcv}(c) gives the result.
If however $v(G;3)\leq 3$ then $v(G;3)\in\{1,2,3\}$ and we need
to rule out the case $v(G;4_\ell)<v(G;3)$. Because these numbers
are non-negative and congruent mod~2, there are only two possibilities:
the first that $v(G;4_\ell)=0<v(G;3)=2$, and the second that
$v(G;4_\ell)=1<v(G;3)=3$. The first is easily ruled out, as $v(G;4_\ell)=0$
implies $v(H+3)=4$ and hence $cv(H+3)=4$ by Corollary~\ref{corvcv}(b),
contradicting our assumptions. As for
the second, we see that $v(G;3)=3$ implies $v(H+4_\ell)\in\{0,4\}$.
The case $v(H+4_\ell)=4$ cannot happen because it implies $cv(H+4_\ell)=4$,
contradicting our assumptions. The final case
to deal with is $v(H+4_\ell)=0$; but this implies $v(H)=4$ and
hence $cv(H)=4$, again contradicting our assumptions.
The proof of the theorem is hence complete.
\end{proof}

Finally, we show how to use this result to compute the
value of the simple loony endgames in question; this will finish
the proof of correctness of our algorithm for computing
values of simple loony endgames.

\begin{corollary}\label{vmultiple3chains}
If $G$ is a simple loony endgame with at least two 3-chains, we can compute its value as follows:

(a) If $cv(G) \geq 2$ then $v(G)=cv(G)$.

(b) If $cv(G)<2$ and $cv(G)$ is even, the value of $G$ is 2 except if $G$ has a 4-loop and $cv(G\backslash 4_\ell)=4$, in which case $v(G)=0$.

(c) If $cv(G)<2$ and $cv(G)$ is odd then $v(G)=1$.
\end{corollary}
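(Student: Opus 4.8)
The plan is to dispatch part (a) in one line and then, for parts (b) and (c), first confine $v(G)$ to just two candidate values and afterwards decide between them using the move-value identities $v(G;3)=1+|v(G\backslash 3)-2|$ and $v(G;4_\ell)=|v(G\backslash 4_\ell)-4|$ recorded in Section~\ref{stayincontrol}. Part (a) is exactly Lemma~\ref{lemmavcv}(a). For (b) and (c) we have $cv(G)<2$, and since $G$ contains a $3$-chain, Lemma~\ref{lemmavcv}(b),(c) give $0\le v(G)\le 3$ together with $v(G)\equiv cv(G)\bmod 2$; hence $v(G)\in\{0,2\}$ when $cv(G)$ is even and $v(G)\in\{1,3\}$ when $cv(G)$ is odd. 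The whole content is therefore to decide which of the two candidates occurs.

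For part (b) I would use the observation that $v(G)=\min_m v(G;m)$ over opening moves $m$, and that every opening move has non-negative value. Thus $v(G)=0$ iff some opening move has value $0$. A chain opening has value $\ge 1$ and a loop of length $\ge 6$ has value $\ge 2$, so the only opening that can realise $0$ is a $4$-loop, of value $|v(G\backslash 4_\ell)-4|$, which vanishes exactly when $v(G\backslash 4_\ell)=4$. Since $G\backslash 4_\ell$ still contains a $3$-chain, Corollary~\ref{corvcv}(b) and Lemma~\ref{lemmavcv}(a) give $v(G\backslash 4_\ell)=4\iff cv(G\backslash 4_\ell)=4$. Hence $v(G)=0$ precisely when $G$ has a $4$-loop with $cv(G\backslash 4_\ell)=4$, and $v(G)=2$ otherwise.

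For part (c) it suffices to exhibit one opening move of value $1$, as $v(G)\ge 1$ is forced by parity. Deleting a $3$-chain leaves a game with a $3$-chain and even controlled value $cv(G\backslash 3)=cv(G)+1\le 2$, so $v(G\backslash 3)\in\{0,2\}$ by Lemma~\ref{lemmavcv}. If $v(G\backslash 3)=2$ then $v(G;3)=1+|2-2|=1$. If instead $v(G\backslash 3)=0$, the same observation used in part (b), applied to $G\backslash 3$, produces a $4$-loop of $G$ with $v(H)=4$, where $H=G\backslash 3\backslash 4_\ell$; this $H$ has a $3$-chain, so $cv(H)=4$ by Corollary~\ref{corvcv}(b). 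A short terminal-bonus check (adjoining a $3$-chain does not change the type of $H$) gives $cv(H+3)=cv(H)-1=3$, whence $3=cv(H+3)\le v(H+3)\le v(H+3;3)=1+|4-2|=3$ forces $v(H+3)=3$; since $G\backslash 4_\ell=H+3$ this yields $v(G;4_\ell)=|3-4|=1$. Either way $v(G)=1$.

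The routine but genuinely fiddly ingredient is the terminal-bonus bookkeeping: at each step one must verify that $tb$ does not jump when a $3$-chain or a $4$-loop is removed, so that $cv$ shifts by the naive additive amount; this is what validates $cv(G\backslash 3)=cv(G)+1$ and $cv(H+3)=3$, and it works precisely because $G$ retains both a $3$-chain and a loop throughout. The only step that is not purely formal is the value computation $v(H+3)=3$ in the second branch of (c), whose lower bound comes from $v\ge cv$ (Lemma~\ref{lemmavcv}) and whose upper bound comes from opening the $3$-chain. I expect this branch to be the main obstacle; alternatively one can simply read off the optimal move from Theorem~\ref{theoremmultiple3chains} and substitute it into the relevant move-value formula to obtain the same values.
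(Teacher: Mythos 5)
Your proof is correct, and every step checks out, including the terminal-bonus bookkeeping: $tb$ is indeed unchanged when a 3-chain is removed from, or adjoined to, a game that retains a 3-chain, which validates $cv(G\backslash 3)=cv(G)+1$ and $cv(H+3)=cv(H)-1=3$. But your route differs from the paper's at the decision step. The paper treats the corollary as a genuine consequence of Theorem~\ref{theoremmultiple3chains}: in part (b) it invokes that theorem to identify the optimal move (noting $cv(H+4_\ell)$ and $cv(H)$ are odd, so only the condition $cv(H+3)=4$ is live), and concludes $v(G)>0$ whenever a 3-chain opening is optimal; in part (c) it organizes the cases around $v(G;3)$, using Corollary~\ref{corvcv}(c) to exclude $v(G;3)\geq5$ and Corollary~\ref{corvcv}(b) to exclude $v(G\backslash 3)=4$. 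You instead work from $v(G)=\min_m v(G;m)$ and the move-value formulas of Section~\ref{stayincontrol}: chain openings have value $\geq 1$ and openings of loops of length $\geq 6$ have value $\geq 2$, so only a 4-loop opening can realise the value $0$ --- this settles (b) with no appeal to the optimal-move theorem at all; and in (c) you pin down $v(G\backslash 3)\in\{0,2\}$ directly via Lemma~\ref{lemmavcv}(b),(c) applied to $G\backslash 3$, making Corollary~\ref{corvcv}(c) unnecessary. The two arguments then coincide exactly in the final branch: $v(G\backslash 3)=0$ forces a 4-loop with $v(H)=4$, hence $cv(H)=4$ by Corollary~\ref{corvcv}(b), then $cv(H+3)=3$ and $v(H+3)=3$, giving $v(G;4_\ell)=1$; your derivation of $v(H+3)=3$ from $v\geq cv$ plus the 3-chain opening bound is equivalent to the paper's one-line appeal to Lemma~\ref{lemmavcv}(a) with $cv=3\geq2$. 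What your version buys is logical independence of the value formula from Theorem~\ref{theoremmultiple3chains} (which you relegate to an optional alternative), at the cost of re-proving in miniature the ``only a 4-loop can have value zero'' fact that the theorem packages; the paper's version instead exhibits the move algorithm and the value algorithm as two faces of one analysis.
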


\begin{proof}
(a) This is Lemma~\ref{lemmavcv}(a).

(b) We know $v(G)$ is even, so Lemma~\ref{lemmavcv}(b) and (c)
imply $v(G)\in\{0,2\}$. If $G$ has no 4-loop then $v(G)=0$ is
impossible, and hence $v(G)=2$ as claimed. If $G=H+3+4_\ell$ has a 4-loop,
then we want to use Theorem~\ref{theoremmultiple3chains} to find an
optimal move. Note first that $cv(H+4_\ell)$ and $cv(H)$ are odd,
so they cannot be~4, and so the only question we need to consider
is whether $cv(G\backslash 4_\ell)=cv(H+3)=4$ or not. If $cv(H+3)=4$ 
then $v(G\backslash 4_\ell)=4$, so $v(G;4_\ell)=0$ and hence $v(G)=0$
as claimed. If however $cv(H+3)\not=4$ then Theorem~\ref{theoremmultiple3chains}
tells us that opening a 3-chain is optimal, and hence $v(G)>0$, giving
$v(G)=2$ as the only possibility.

(c) This time $v(G)$ is odd, and Lemma~\ref{lemmavcv}(b) and (c)
then imply $v(G)\in\{1,3\}$. If $v(G;3)=1$ then $v(G)=1$ which is
what we want. If $v(G;3)\geq5$ then Corollary~\ref{corvcv}(c)
implies $cv(G)\geq5$, a contradiction. The only possibility
left is $v(G;3)=3$, from which we need to deduce $v(G)=1$.

Now $v(G;3)=3$ implies $v(G\backslash3)\in\{0,4\}$. However $v(G\backslash3)=4$
is impossible, as Corollary~\ref{corvcv}(b) would then imply
$cv(G\backslash3)=4$ which would imply $cv(G)=3$, a contradiction.

We deduce $v(G\backslash3)=0$; however $G\backslash3$ is non-empty
(as it contains a 3-chain), and hence $G\backslash3$ must contain
a 4-loop. Write $G=3+4_\ell+H$, so $v(4_\ell+H)=0$ and hence $v(H)=4$.
But this implies $cv(H)=4$ (Corollary~\ref{corvcv}(b) again),
so $cv(H+3)=3$, thus $v(H+3)=3$ and hence $v(G;4_\ell)=1$, proving
that $v(G)=1$.
\end{proof}


\begin{thebibliography}{1}

\bibitem{berl}
Elwyn Berlekamp.
\newblock {\em The dots-and-boxes game}.
\newblock A K Peters Ltd., Natick, MA, 2000.
\newblock Sophisticated child's play.

\bibitem{bs}
Elwyn Berlekamp and Katherine Scott.
\newblock Forcing your opponent to stay in control of a loony dots-and-boxes
  endgame.
\newblock In {\em More games of no chance ({B}erkeley, {CA}, 2000)}, volume~42
  of {\em Math. Sci. Res. Inst. Publ.}, pages 317--330. Cambridge Univ. Press,
  Cambridge, 2002.

\bibitem{ww}
Elwyn~R. Berlekamp, John~H. Conway, and Richard~K. Guy.
\newblock {\em Winning ways for your mathematical plays. {V}ol. 3}.
\newblock A K Peters Ltd., Natick, MA, second edition, 2003.

\bibitem{scott}
Katherine Scott.
\newblock Loony endgames in dots and boxes.
\newblock UC Berkeley MS thesis., 2000.

\end{thebibliography}

\end{document}